\newcommand{\Z}{\mathbb{Z}}
\newtheorem{thm}{Theorem}[section]
\newtheorem{lem}[thm]{Lemma}
\newtheorem{prop}[thm]{Proposition}
\newtheorem{defn}[thm]{Definition}
\newtheorem{definition}[thm]{Definition}
\newtheorem{remark}[thm]{Remark}
\newtheorem{rem}[thm]{Remark}
\newtheorem{cor}[thm]{Corollary}
\newtheorem{exa}[thm]{Example}
\numberwithin{thm}{subsection}
\newcommand{\av}[1]{\left|{#1}\right|}
\newcommand{\be}{\begin{enumerate}}
\newcommand{\bi}{\begin{itemize}}
\newcommand{\ee}{\end{enumerate}}
\newcommand{\ei}{\end{itemize}}
\newcommand{\ii}{\item}
\newcommand{\zm}{\Z/m\Z}
\newcommand{\zmod}[1]{\Z/{#1}\Z}
\newcommand{\ord}{\mathsf{ord}}
\newcommand{\ordt}[1]{\mathsf{ord}_{#1}(2)}
\newcommand{\add}[1]{\mathcal{A}_{{#1}}}
\newcommand{\gr}[1]{\mathcal{G}({#1})}
\newcommand{\units}[1]{\mathcal{U}({#1})}
\newcommand{\nilp}[1]{\mathcal{N}({#1})}
\DeclareMathOperator{\lcm}{lcm}
\newcommand{\tx}{\widetilde{x}}
\newcommand{\ty}{\widetilde{y}}
\newcommand{\hx}{\widehat{x}}
\newcommand{\hy}{\widehat{y}}
\newcommand{\tree}[3]{{\mathrm{Tree}}_{#1}^{({#3})}({#2})}
\newcommand{\kgp}{\oblong}
\newcommand{\kpg}{\kgp}
\renewcommand{\phi}{\varphi}
\title{The shape of $x^2\bmod n$}
\author{Lee DeVille\\Department of Mathematics, University of Illinois}
\begin{document}

\maketitle

\begin{abstract}
  We examine the graphs generated by the map $x\mapsto x^2\bmod n$ for various $n$, present some results on the structure of these graphs, and compute some very cool examples.
\end{abstract}


\section{Overview}

For any $n$, we consider the map 
\begin{equation*}
  f_n(x) := x^2\bmod n.
\end{equation*}
From this map, we can generate a (directed) graph $\gr n$ whose vertices are the set $\{0,1,\dots, n-1\}$ and edges from $x$ to $f_n(x)$ for each $x$.  We show a few examples of such graphs  later in the paper --- one particularly intruiging graph is shown in Figure~\ref{fig:1024attractor}, that appears as a subgraph of $\gr{10455}$.  Figure~\ref{fig:817}, which shows the entire graph $\gr{817}$, is also excellent. As we can see from examples, sometimes the graph $\gr n$ is a forest (a collection of trees), sometimes it has loops, etc.  In this paper we will seek to characterize all such graphs; not surprisingly, the shape of these graphs depend on the number theoretic properties of the integer $n$.

We give a quick summary of the results below:

\be

\ii  (Theorem~\ref{thm:kronecker}) We show that if $n=a\cdot b$ where $\gcd(a,b)=1$, then the graph $\gr n$ is the Kronecker product of the graphs $\gr a$ and $\gr b$ (we define the Kronecker product in Definition~\ref{def:kronecker} below).  Using induction,  we can therefore determine the graph $\gr n$ in terms of its prime decomposition, and all that remains is to describe $\gr{p^k}$ for all primes $p$ and $k\ge 1$.   We break this up into several cases.

\ii (Theorem~\ref{thm:units})   Let $p$ be an odd prime.  Let us denote by $\units{p^k}$ the set of numbers that are relatively prime to $p^k$; these are the multiplicative units modulo $p^k$.   We use heavily the fact that the units under multiplication modulo $p^k$ form a cyclic group, and this allows us to characterize $\units{p^k}$ (we note here that the technique for $\units{p^k}$ follows closely the results of~\cite{vasiga2004iteration} where they studied $\gr p$ for $p$ prime);

\ii (Theorem~\ref{thm:nilpotent}) Let $p$ be an odd prime.  This theorem will fully characterize $\nilp{p^k}$, the component of $\gr{p^k}$ that corresponds to the nilpotent elements.

\ii (Section~\ref{sec:2^k}) Finally, we work on the graph for $\gr{2^k}$, describing both $\units{2^k}$ and $\nilp{2^k}$ separately. 

\ee

From this, we can  characterize any $\gr n$, and we then use this characterization to compute several quantities of interest and work out many sweet examples.

A few notes:  this manuscript does not contain any new theoretical results and, in fact, only uses results from elementary number theory\footnote{Anyone familiar with the author's body of research would know that any number theory appearing here is perforce extremely elementary}.  If the reader is so inclined, they can think of this paper as an extended pedagogical example.  In fact, the author recently taught UIUC's MATH 347 (our undergrad math major "intro to proofs" course) and found that the students responded pretty well to visualizing the graphs of various functions that appear in a math course at this level (functions modulo some integer being a key family of examples).  The author also made some videos with visualizations for the square function considered here~\cite{vid}, and got intrigued by the patterns that appear.  This led to the current manuscript.

\section{The main theory}

\setcounter{subsection}{-1}
\subsection{Summary of results}

As noted above, there are four main results, and the sections below address each of these in order.   After we have stated the theoretical results in this section, this is sufficient to describe all $\gr n$ --- with the caveat that in many cases there is still some work to do to compute everything.  We work out many concrete examples in Section~\ref{sec:computations}.

\subsection{Kronecker products and the Classical Remainder Theorem}\label{sec:Kronecker}

In this section we show how the graphs $\gr a$ and $\gr b$ ``combine'' to form the graph of $\gr{ab}$, when $a,b$ are relatively prime.

\begin{definition}\label{def:kronecker}
  Let $G_1 = (V_1,E_1)$ and $G_2 = (V_2,E_2)$ be two (directed) graphs.  Let us  define $G_1\kgp G_2$, the {\bf Kronecker product} of $G_1$ and $G_2$, as follows:  the vertex set of $G_1\kgp G_2$ is the Cartesian product $V_1\times V_2$, and we say that 
  \begin{equation*}
    (v_1,w_1)\to (v_2,w_2)\in G_1\kgp G_2\mbox{  iff  } v_1\to w_1\in G_1\mbox{ and } v_2\to w_2\in G_2.
  \end{equation*}
  In particular, the edge exists for the pair iff there is an edge for the first component of each and one for the second component of each.
\end{definition}

\begin{remark}
  One motivation for calling this the Kronecker product is that the adjacency matrix of $G_1\kgp G_2$ is the Kronecker matrix product of the adjacency matrices of $G_1$ and $G_2$.  This is also called the Cartesian product of graphs by many authors.
    We also note that it is straightforward to show that the Kronecker graph product is associative, and so we can define $n$-ary Kronecker products unambiguously as well.
\end{remark}

\begin{thm}\label{thm:kronecker}If $gcd(a,b)=1$, then 
\begin{equation*}
  \gr{ab} \cong \gr a \kgp \gr b.
\end{equation*}
Furthermore, If we write
\begin{equation*}
  n = p_1^{\alpha_1} p_2^{\alpha_2} \cdots p_j^{\alpha_j} 
\end{equation*}
as the prime factorization of $n$, then
\begin{equation*}
  \gr n \cong \bigbox_{i=1}^j \gr{p_i^{\alpha_i}},
\end{equation*}
namely:  the graph for $n$ is just the Kronecker product of the individual graphs for each of the $p_i^{\alpha_i}$.\end{thm}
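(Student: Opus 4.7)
The plan is to invoke the Chinese Remainder Theorem. When $\gcd(a,b)=1$, the map
\begin{equation*}
  \psi: \Z/ab\Z \to \Z/a\Z \times \Z/b\Z, \qquad \psi(x) = (x \bmod a,\ x \bmod b)
\end{equation*}
is a ring isomorphism, and in particular a bijection between the vertex sets of $\gr{ab}$ and the underlying vertex set of $\gr{a}\kgp\gr{b}$. I would use $\psi$ as the candidate graph isomorphism, and the entire proof reduces to checking that $\psi$ carries edges to edges.

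The key observation is that, because $\psi$ is a \emph{ring} homomorphism, it intertwines the squaring maps: $\psi(x^2 \bmod ab) = (x^2 \bmod a,\ x^2 \bmod b) = \bigl(f_a(x \bmod a),\ f_b(x \bmod b)\bigr)$. So $\psi \circ f_{ab} = (f_a \times f_b) \circ \psi$. Now an edge $x \to y$ of $\gr{ab}$ means $y = f_{ab}(x)$; applying $\psi$ gives $\psi(y) = (f_a(\psi(x)_1), f_b(\psi(x)_2))$. Unwinding Definition~\ref{def:kronecker}, this is precisely the condition that there is an edge from $\psi(x)$ to $\psi(y)$ in $\gr{a}\kgp\gr{b}$. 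Since $\psi$ is a bijection and edges correspond both ways, $\psi$ is a graph isomorphism.

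For the second statement, I would induct on the number of distinct prime factors $j$. The base case $j=1$ is vacuous. For the inductive step, write $n = p_1^{\alpha_1} \cdot m$ with $m = p_2^{\alpha_2} \cdots p_j^{\alpha_j}$; since $\gcd(p_1^{\alpha_1}, m) = 1$, the first part gives $\gr{n} \cong \gr{p_1^{\alpha_1}} \kgp \gr{m}$, and by induction $\gr{m}$ is itself a Kronecker product of the $\gr{p_i^{\alpha_i}}$ for $i\geq 2$. Associativity of $\kgp$ (noted in the remark after Definition~\ref{def:kronecker}) assembles these into the desired $j$-fold product.

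The main potential obstacle is purely bookkeeping: one has to be a little careful that the definition of $\kgp$ matches the structure imposed by the CRT bijection, i.e.\ that ``edge in both coordinates'' on the product side corresponds to ``squared componentwise'' on the CRT side. There are no hard computations; once squaring is recognized as a purely algebraic operation preserved by the ring isomorphism $\psi$, the rest of the proof is essentially a translation between two descriptions of the same functional graph.
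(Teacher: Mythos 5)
Your proposal is correct and follows essentially the same route as the paper: both use the CRT bijection $x \mapsto (x\bmod a, x\bmod b)$ as the vertex map, observe that it is a ring isomorphism and hence intertwines the squaring maps (so edges correspond in both directions), and then obtain the prime-factorization statement by induction using associativity of $\kgp$. The only cosmetic difference is that you phrase the edge check as a conjugacy relation $\psi\circ f_{ab}=(f_a\times f_b)\circ\psi$, while the paper verifies the two congruences directly.
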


Perhaps surprisingly, the proof of Theorem~\ref{thm:kronecker} is more or less equivalent to the Classical Remainder Theorem (see Lemma~\ref{lem:crt} below).  The basic idea goes as follows:  if we consider the map $x\mapsto x^2\bmod {ab}$, then we can naturally map this to a pair where we first consider the operation modulo $a$, and then consider the operation modulo $b$.  (In fact, we can do this for any $a,b$, not just those that are relatively prime.)  However, if $a,b$ are relatively prime, then the CRT tells us that we can invert this process, and this is what gives us the full isomorphism.

\begin{lem}\label{lem:crt}[Classical Remainder Theorem (CRT)]~\cite[\S 7.6]{dummit1991abstract}
  Given a set of coprime numbers $n_1, n_2, \dots, n_j$, then for any integers $a_1,\dots, a_j$, the system
  \begin{equation*}
    x \equiv a_1\bmod n_1,\quad    x \equiv a_2\bmod n_2,\quad\cdots, \quad  x \equiv a_j\bmod n_j 
  \end{equation*}
  has a solution.  Moreover, any two such solutions are congruent modulo $n_1\times n_2\times \cdots \times n_j$.
\end{lem}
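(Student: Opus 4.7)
The plan is to induct on the number $j$ of moduli, with the base case $j=2$ established directly via Bezout's identity and the inductive step reducing to a pair of coprime moduli.

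For the base case $j=2$, since $\gcd(n_1,n_2)=1$, Bezout's identity produces integers $u,v$ with $u n_1 + v n_2 = 1$. I would then propose the candidate $x := a_2 u n_1 + a_1 v n_2$ and verify both congruences by reducing modulo each $n_i$ separately: modulo $n_1$ the first summand vanishes and $v n_2 \equiv 1 - u n_1 \equiv 1$, giving $x \equiv a_1$; modulo $n_2$ the symmetric computation gives $x \equiv a_2$.

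For the inductive step, assume the result for any $j-1$ pairwise coprime moduli. Given $n_1,\dots,n_j$ and residues $a_1,\dots,a_j$, first apply the inductive hypothesis to $n_2,\dots,n_j$ to obtain some $b$ with $b \equiv a_i \pmod{n_i}$ for all $i \ge 2$. Since $\gcd(n_1,n_i)=1$ for every $i \ge 2$, we have $\gcd(n_1, n_2 n_3 \cdots n_j)=1$ (a short auxiliary induction using Euclid's lemma). Now apply the base case to the coprime pair $(n_1, n_2 n_3 \cdots n_j)$ with target residues $(a_1,b)$ to produce a simultaneous solution $x$ to the full system.

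For uniqueness modulo $N := n_1 n_2 \cdots n_j$, suppose $x$ and $x'$ are two solutions. Then $n_i \mid (x-x')$ for each $i$, and pairwise coprimality of the $n_i$ implies $N \mid (x-x')$, by another short induction using the standard fact that if $a \mid m$, $b \mid m$, and $\gcd(a,b)=1$, then $ab \mid m$. The only real obstacle is this latter lemma on products of coprime divisors — but it is entirely routine at the level of elementary number theory assumed throughout the paper, and can be cited rather than proved in place.
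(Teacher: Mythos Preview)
Your proof is correct and is essentially the standard textbook argument; there is nothing to fault in the logic. However, the paper does not actually prove this lemma at all --- it simply states the result and cites Dummit and Foote for the proof, then immediately moves on to Remark~\ref{rem:CRT} (the ring-isomorphism reformulation) and the proof of Theorem~\ref{thm:kronecker}. So there is no ``paper's own proof'' to compare against here: you have supplied a full elementary proof where the paper only gives a reference.
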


\begin{remark}\label{rem:CRT}
  An equivalent statement of the CRT is that the map
  \begin{align*}
     \zmod{(n_1\times n_2\times \cdots \times n_j)} &\to \zmod{n_1}\times \zmod{n_2}\times\cdots\times \zmod{n_j},\\
     x\bmod{(n_1\times n_2\times \cdots \times n_j)} &\mapsto (x\bmod n_1, x\bmod n_2, \dots, x\bmod n_j), 
  \end{align*}
  is a ring isomorphism, which is why it respects the squaring operation.
\end{remark}

\begin{proof}[Proof of Theorem~\ref{thm:kronecker}]
  The first claim basically boils down to the interpretation of Remark~\ref{rem:CRT}.  Let us consider the map from the vertex set of $\gr{ab}$ to the vertex set of  $\gr a\kgp \gr b$ given by $x\bmod(ab) \mapsto (x\bmod a, x\bmod b)$.  By the CRT, this is a bijection of the vertex sets.  Now, assume that there is an edge $x\to y$ in $\gr{ab}$ --- this will be true iff $x^2=y\bmod{(ab)}$.  But then $x^2=y\bmod a$ and $x^2=y\bmod b$, and therefore there is an edge $(x\bmod a, x\bmod b) \to (y\bmod a, y\bmod b)$ in $\gr a\kgp \gr b$ as well.  Conversely, if there is an edge $(x\bmod a, x\bmod b) \to (y\bmod a, y\bmod b)$ in $\gr a\kgp \gr b$, then by CRT we have $x^2=y\bmod{(ab)}$, and there is an edge $x\to y$ in $\gr{ab}$.  
  Finally, the second claim follows directly from the first using induction.
\end{proof}

\subsection{The graph of units $\units{p^k}$ when $p$ is an odd prime}\label{sec:units}

In this section, we determine how to compute the graph containing the ``units'' modulo $p^k$ when $p$ is an odd prime.  We start with a few definitions and state the main result, and then break down how we prove it.

\begin{defn}
  The set of {\bf multiplicative units} modulo $n$ (or, more simply, the {\bf units} modulo $n$) are those numbers in the set $\{0,1,\dots, n-1\}$ that are relatively prime to $n$.  This set is typically denoted $(\zmod{n})^\times$ and the function $\phi(n) = \av{(\zmod{n})^\times}$ is called Euler's phi function (or totient function).  In this paper we will denote by $\units{p^k}$ the graph induced by $x\mapsto x^2\bmod{p^k}$ on the set of units.
\end{defn}

\begin{defn}
  Let $\gcd(d,k)=1$.  We define the {\bf multiplicative order of $k$, modulo $d$}, denoted $\ord_d(k)$, as the smallest power $p$ such that $k^p\equiv 1\bmod d$, or $d|(k^p-1)$. 
\end{defn}

\begin{defn}\label{def:flower-cycle}
Here we define several special graphs that appear all over the place in $\units{p^k}$:

\bi

\ii The {\bf (directed) cycle graph} of length $k$, denoted $C_k$, is a directed graph with vertex set $\{1,\dots, k\}$ and arrows $v_i\to v_{i+1\bmod k}$.  Note that we also allow the cycle of length 2, given by the graph $1\to 2, 2\to 1$, and the directed cycle of length $1$, which is just a single vertex labelled 1 with a loop to itself.

\ii 
A {\bf tree} is a connected graph with no cycles. A {\bf rooted tree} is a tree with a distinguished vertex, called the root.    A {\bf grounded tree} is a rooted tree with two properties:  all edges flow towards the root (i.e. for any vertex in the tree, there is a path from that vertex to the root), and the root has a loop to itself. 

\ii The {\bf flower cycle of length $\alpha$ and flower type $T$}, denoted $C_\alpha(T)$, is the directed graph defined in the following manner.  Take $k$ copies of the grounded tree $T$, denoted $T_1,\dots, T_k$.  Remove the loop at the root from each of these, and then replace it with an edge from the root of tree $i$ to the root of tree $i+1\bmod \alpha$.  

\ii 
The {\bf regular grounded tree of width $w>1$ and $\ell>0$ layers} (denoted $T_{w}^{\ell}$) is a tree with $w^\ell$ vertices, described as follows:  start with a root vertex and a loop to itself, this forms layer 0.  In layer 1, there are $w-1$ vertices, each with an edge to the root.  For any layer $k< \ell$, and for any vertex in layer $k$, there are $w$ vertices in layer $k+1$ that map to each vertex in layer $k$.   Layer $\ell$ is given by leaves that map to vertices in level $\ell-1$.
\ei
\end{defn}

\begin{remark}
A few remarks:
\bi\ii  Another way to describe the flower cycle is that we start with a cycle, and then we glue $k$ copies of the tree at each vertex in the cycle.  

\ii Note that the in-degree of every vertex in $T_w^\ell$ is either $w$ or 0.

\ii The graph $C_\alpha(T_w^\theta)$ appears often in the sequel (esp. with $w=2$), and we give a colloquial description here:  start with a cycle of length $C_\alpha$, and then to each vertex in the cycle, ``glue'' a tree of type $T_w^\theta$. In this picture, the nodes in the cycle itself are the periodic points, and the trees coming off each node are the preperiodic points that map into that particular orbit. See Figure~\ref{fig:flower-examples} for a few examples.

\ei

\end{remark}

\begin{figure}[th]
\begin{center}
\end{center}
\includegraphics[width=0.95\textwidth]{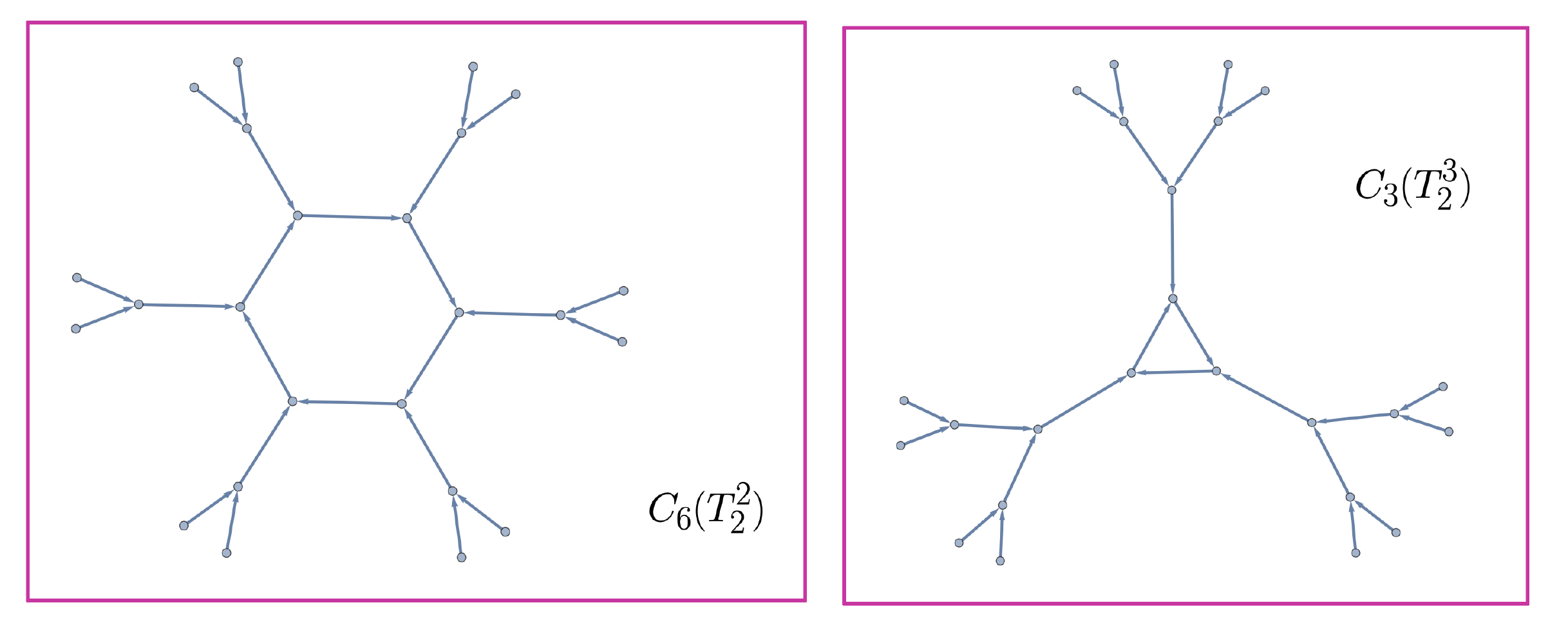}
\caption{Examples of $C_6(T_2^2)$ and $C_3(T_2^3)$ as defined in Definition~\ref{def:flower-cycle}.  Recall that the subscript gives the number of terms in the cycle (or periodic orbit) of the graph, and the tree tells us what is ``hanging off'' of each of those periodic vertices.  In one case we have the grounded tree of width two and depth two, and in the other width two and depth three.  Trees of width two show up ubiquitously below --- in particular they are the only trees we see in $\units{p^k}$ when $p$ is an odd prime.}
\label{fig:flower-examples}
\end{figure}

\begin{thm}\label{thm:units}[Structure of $\units{p^k}$]
Let $p$ be an odd prime, and write $m=\phi(p^k)$.  Write $m = 2^\theta\mu$ where $\mu$ is odd.  (Note that $\phi(p^k)$ is always even, so $\theta\ge 1$.)  For each divisor $d$ of $\mu$, take $\phi(d)/\ordt d$ disjoint copies of $C_{\ordt d}(T^\theta_2)$, and $\units{p^k}$ is isomorphic to the disjoint union of these cycles over the divisors $d$. 
\end{thm}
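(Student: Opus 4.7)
The strategy is to exploit the fact that $\units{p^k} = (\Z/p^k\Z)^\times$ is a cyclic group of order $m = \phi(p^k)$. Fixing a generator $g$, the bijection $g^a \leftrightarrow a$ identifies $\units{p^k}$ with the additive group $\Z/m\Z$ and conjugates the squaring map $x \mapsto x^2$ to the doubling map $a \mapsto 2a \bmod m$. Thus the problem reduces to describing the functional graph of doubling on $\Z/m\Z$, where $m = 2^\theta \mu$.

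Next, I would apply the CRT decomposition $\Z/m\Z \cong \Z/2^\theta\Z \oplus \Z/\mu\Z$. Since doubling is an additive endomorphism that respects this splitting, an argument essentially identical to the proof of Theorem~\ref{thm:kronecker} (with doubling in place of squaring) shows that the doubling graph on $\Z/m\Z$ is the Kronecker product of the doubling graphs on the two factors. I then analyze each factor separately. On the odd part $\Z/\mu\Z$, doubling is a bijection (since $\gcd(2,\mu)=1$), so its graph is a disjoint union of cycles; an element $y$ of additive order $d$ (with $d \mid \mu$) satisfies $2^k y \equiv y$ iff $d \mid (2^k-1)$, so the cycle through $y$ has length $\ordt d$, and the $\phi(d)$ elements of additive order $d$ partition into $\phi(d)/\ordt d$ disjoint cycles of length $\ordt d$. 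On the $2$-power part $\Z/2^\theta\Z$, the element $0$ has a loop (and is the unique fixed point), every even nonzero element has exactly two preimages (namely $y/2$ and $y/2 + 2^{\theta-1}$), and every odd element has none; inducting layer by layer recovers the grounded tree $T_2^\theta$ exactly.

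To finish, I would verify that the Kronecker product $C_\alpha \kgp T_2^\theta$ is isomorphic to the flower cycle $C_\alpha(T_2^\theta)$: the root pairs $(i,r)$ form a cycle of length $\alpha$ (since $r$ carries a loop in $T_2^\theta$, doubling moves only the $C_\alpha$-coordinate), and for each non-root $t$ at layer $\ell$ in $T_2^\theta$, the $\alpha$ copies $(i,t)$ distribute one to each of the $\alpha$ ``petal trees'' hanging off the cycle, so that each cycle vertex carries a complete copy of $T_2^\theta$ with its loop replaced by the cycle edge. Combining this with the cycle count from the odd part gives precisely the stated disjoint union of flower cycles $C_{\ordt d}(T_2^\theta)$ over divisors $d$ of $\mu$.

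The main obstacle I anticipate is the final combinatorial identification of $C_\alpha \kgp T_2^\theta$ with $C_\alpha(T_2^\theta)$: although conceptually evident, it requires careful indexing to confirm that the tree sitting at each cycle vertex is genuinely a copy of $T_2^\theta$ (rather than some twisted quotient where layers get reshuffled around the cycle), as well as to handle the degenerate case $d=1$, where $\ordt 1 = 1$ produces $C_1(T_2^\theta) = T_2^\theta$ as the component containing the identity $1 \in \units{p^k}$.
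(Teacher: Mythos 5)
Your proposal is correct, and its first half coincides exactly with the paper's: the paper also reduces $\units{p^k}$ to the doubling map on $\zmod m$ via a generator (Lemma~\ref{lem:cyclic}), and its analysis of the odd part --- cycles of length $\ordt d$ partitioning the $\phi(d)$ elements of additive order $d$ --- is word-for-word your argument. Where you genuinely diverge is in how the $2$-power part gets woven in. The paper stays inside $\zmod m$ and argues directly: it first identifies the tree rooted at $0$ (the multiples of $\mu$, stratified by how many factors of $2$ they carry), and then shows by an explicit preimage count (if $2y\equiv x$ then also $2(y+m/2)\equiv x$) that a copy of $T_2^\theta$ hangs off each point of each periodic orbit. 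You instead split $\zmod m\cong \zmod{2^\theta}\oplus\zmod\mu$ by CRT, observe that doubling respects the splitting so the functional graph is the Kronecker product of the two factors, identify the $2$-part as $T_2^\theta$ and the odd part as a union of cycles, and finish with the identity $C_\alpha\kgp T_2^\theta\cong C_\alpha(T_2^\theta)$. That identity is precisely Proposition~\ref{prop:kron2}(iv) of the paper (proved there via the level-shift map $\psi(v,\beta)=(v,\beta-\mathcal L(v))$, which is exactly the ``reshuffling'' you flag as the delicate point --- your instinct that this needs care is right, and the level-shift is how one dispels the worry about twisted quotients). The trade-off: your route is more modular, reusing machinery the paper must develop anyway for Theorem~\ref{thm:kronecker} and Section~\ref{sec:computations}, and it replaces the paper's somewhat informal ``move up the branches of the tree'' step with a clean product decomposition; the paper's direct route, on the other hand, yields explicit coordinates for every vertex (which elements sit in which layer of which petal), which it then leans on heavily in the worked examples. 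Your handling of the degenerate divisor $d=1$ giving the component $C_1(T_2^\theta)=T_2^\theta$ containing the identity is also consistent with the statement as written.
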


\begin{remark}\label{rem:2or0}
  Note that this implies that every vertex in $\units{p^k}$ has an in-degree of two or zero (note that the trees attached to the cycles are always of the form $T_2^\theta$).
\end{remark}

To describe the result a bit more colloquially:  we first compute $m = \phi(p^k)$, and pull out all possible powers of $2$, until we obtain the odd number $\mu$.  From there, we enumerate all of the divisors $d$ of $\mu$, and these $d$ determine all of the periodic orbits of $\units{p^k}$.  Then the number of powers of $2$ that we pulled out gives us the full structure of the pre-periodic points attached to these periodic orbits.  


Onto the proof:  we start with the following result that  was proven by Gauss in~\cite{gauss1986english}:
\begin{prop}
  If $n=p^k$, where $p$ is an odd prime, then $(\zmod{p^k})^\times$ is a cyclic group of order $\phi(p^k) = p^k-p^{k-1}$.
\end{prop}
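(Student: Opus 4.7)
The order count is quick: $\phi(p^k)$ is the number of elements of $\{1,\ldots,p^k\}$ coprime to $p$, and the multiples of $p$ in this range are $p,2p,\ldots,p^{k-1}\cdot p$, so $\phi(p^k)=p^k-p^{k-1}$. Cyclicity is the real content, and my plan is the classical three-step ladder: first show $(\Z/p\Z)^\times$ is cyclic, then lift a generator to $(\Z/p^2\Z)^\times$, then show that a generator mod $p^2$ remains a generator mod $p^k$ for every $k\ge 2$.

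For step one, I would use the standard field-theoretic argument: since $\Z/p\Z$ is a field, the polynomial $x^d-1$ has at most $d$ roots for every $d\mid p-1$. Letting $N(d)$ be the number of elements of $(\Z/p\Z)^\times$ of order exactly $d$, a counting argument shows $N(d)\in\{0,\phi(d)\}$, and since $\sum_{d\mid p-1}N(d)=p-1=\sum_{d\mid p-1}\phi(d)$, we must have $N(d)=\phi(d)$ for every $d$. In particular $N(p-1)\ge 1$, giving a primitive root $g$ modulo $p$.

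For step two, fix such a $g$. Its order modulo $p^2$ divides $\phi(p^2)=p(p-1)$ and reduces mod $p$ to a multiple of $p-1$, so the order is either $p-1$ or $p(p-1)$. If $g^{p-1}\not\equiv 1\pmod{p^2}$, then $g$ itself is primitive mod $p^2$. Otherwise replace $g$ by $g+p$ and expand
\begin{equation*}
(g+p)^{p-1}\equiv g^{p-1}+(p-1)g^{p-2}p\equiv 1+(p-1)g^{p-2}p\pmod{p^2};
\end{equation*}
since $p\nmid g^{p-2}(p-1)$, this is not $1$ mod $p^2$, so $g+p$ works.

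Step three is the crux and will be the main obstacle, because it is where the hypothesis that $p$ is odd becomes essential. The plan is to prove, by induction on $j\ge 1$, that a primitive root $g$ mod $p^2$ satisfies
\begin{equation*}
g^{p^{j-1}(p-1)}=1+c_j p^j,\qquad p\nmid c_j.
\end{equation*}
The base case $j=1$ is exactly the condition that $g$ is primitive mod $p^2$. For the inductive step, raise $1+c_j p^j$ to the $p$-th power and expand binomially; the quadratic term carries a factor $\binom{p}{2}p^{2j}=\tfrac{p(p-1)}{2}p^{2j}$, and here I need $p$ odd so that this term (and all higher terms) sit in $p^{j+2}\Z$, leaving $g^{p^j(p-1)}\equiv 1+c_j p^{j+1}\pmod{p^{j+2}}$ with $c_{j+1}\equiv c_j\not\equiv 0\pmod p$. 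Taking $j=k-1$ shows $g^{p^{k-2}(p-1)}\not\equiv 1\pmod{p^k}$, which forces the order of $g$ mod $p^k$ to equal $\phi(p^k)$, completing the proof.
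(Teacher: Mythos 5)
Your argument is correct, but there is nothing in the paper to compare it against: the paper does not prove this proposition at all, it simply cites Gauss's \emph{Disquisitiones} and moves on, since the cyclicity of $(\zmod{p^k})^\times$ is used only as a black box feeding into Lemma~\ref{lem:cyclic}. What you have written is the standard complete proof, and all three rungs of your ladder check out: the counting argument $\sum_{d\mid p-1}N(d)=\sum_{d\mid p-1}\phi(d)$ forcing $N(d)=\phi(d)$; the adjustment $g\mapsto g+p$ when $g^{p-1}\equiv 1\pmod{p^2}$, where the linear binomial term $(p-1)g^{p-2}p$ is a unit times $p$ and so cannot vanish mod $p^2$; and the induction $g^{p^{j-1}(p-1)}=1+c_jp^j$ with $p\nmid c_j$. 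You are also right to flag where oddness of $p$ enters: in the inductive step the term $\binom{p}{2}c_j^2p^{2j}=\tfrac{p-1}{2}\,c_j^2\,p^{2j+1}$ lies in $p^{j+2}\Z$ precisely because $\tfrac{p-1}{2}$ is an integer (and $2j+1\ge j+2$ for $j\ge 1$), which is exactly the step that fails for $p=2$ and explains why the paper must treat $\units{2^k}$ separately via Lemma~\ref{lem:3}. The only cosmetic remark is that your concluding sentence implicitly uses that the order of $g$ mod $p^k$ is simultaneously a divisor of $p^{k-1}(p-1)$ and a multiple of $p-1$, hence of the form $p^i(p-1)$; that is immediate from $\gcd(p,p-1)=1$ but is worth one line if you write this out in full.
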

The fact that this group is cyclic allows us to more or less completely characterize the graph structure of $\units{p^k}$.  We follow the approach of~\cite{vasiga2004iteration} who completely characterized $\gr p$ for $p$ prime in the same way (in this section the novelty is that we are extending these ideas to $p^k$, but since $(\zmod{p^k})^\times$ is cyclic the same idea goes through).  

We first give a definition and a lemma:

\begin{defn}
  We define $\add{m}$ as the graph whose vertices are the set $\{0,1,\dots, m-1\}$ and edges given by $x\mapsto 2x\bmod m$.
\end{defn}

\begin{lem}\label{lem:cyclic}
  Let $G$ be any cyclic group of order $m$ (where here we denote the group operation as multiplication).  Call one of its generators $g$.  We can define a function on $G$ by 
  \begin{equation*}
    g^\alpha \mapsto g^{2\alpha},\quad \alpha = 0,1,\dots, m-1.
  \end{equation*}
  This function is conjugate to the map $(\zm, x\mapsto 2x\bmod m)$, and so has the same dynamical structure (periodic orbits, fixed points, etc.) and its graph is isomorphic to $\add m$.
\end{lem}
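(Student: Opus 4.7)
The plan is to exhibit an explicit conjugating isomorphism between the two dynamical systems, namely the map $\phi:\zm\to G$ defined by $\phi(\alpha) = g^\alpha$. Essentially everything reduces to the observation that the exponent arithmetic of $G$ is, by definition of ``cyclic of order $m$,'' identical to arithmetic in $\zm$.

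First I would verify that $\phi$ is well-defined and a bijection. Well-definedness amounts to checking that if $\alpha\equiv\beta\bmod m$ then $g^\alpha=g^\beta$, which is immediate from $g^m = e$. Injectivity follows because $g$ has order exactly $m$ (so $g^\alpha=g^\beta$ forces $m\mid(\alpha-\beta)$), and surjectivity is just the statement that $g$ generates $G$.

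Next I would check that $\phi$ intertwines the two dynamics, i.e.\ the square
\begin{equation*}
\begin{tikzcd}
\zm \arrow[r,"x\mapsto 2x"] \arrow[d,"\phi"'] & \zm \arrow[d,"\phi"] \\
G \arrow[r,"h\mapsto h^2"] & G
\end{tikzcd}
\end{equation*}
commutes. Going down then right sends $\alpha\mapsto g^\alpha\mapsto g^{2\alpha}$; going right then down sends $\alpha\mapsto 2\alpha\mapsto g^{2\alpha}$. These agree on the nose, so $\phi$ is a conjugacy of the two self-maps.

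Since a conjugacy of self-maps induces an isomorphism of the associated functional graphs (vertices correspond via $\phi$, and edges $x\to 2x$ correspond to edges $g^x\to g^{2x}$), we conclude that the graph of $g^\alpha\mapsto g^{2\alpha}$ on $G$ is isomorphic to $\add m$, and all dynamical invariants (periodic orbits, fixed points, tree structure at preperiodic points) transport across. There is no real obstacle here; the only thing one has to be careful about is to keep separate the multiplicative notation on $G$ (where squaring is the dynamics) and the additive notation on $\zm$ (where doubling is the dynamics), which is exactly the content of the conjugation.
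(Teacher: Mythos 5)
Your proof is correct and follows essentially the same route as the paper: the paper's proof defines the same conjugating map $\kappa(a)=g^a$ (your $\phi$), verifies $\gamma\circ\kappa=\kappa\circ\alpha$, and transports periodic/fixed points across the conjugacy. Your version just spells out well-definedness and bijectivity of the exponential map a bit more explicitly, which is a harmless addition.
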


\begin{proof}
  Let us define the maps $\alpha\colon \zm \to \zm$ with $x\mapsto 2x\bmod m$,  $\gamma\colon G\to G$ with $\gamma(g^a) = g^{2a}$, and $\kappa\colon\zm\to G$ with $\kappa(a)  = g^a$.  Since $g$ generates $G$, the map $\kappa$ is invertible.  We see that $\gamma\circ \kappa = \kappa\circ \alpha$, so $\alpha$ and $\gamma$ are conjugate. In particular, note that $\alpha(x)=x$ iff $\gamma(\kappa(x)) = \kappa(x)$ and moreover that $\alpha^k (x) = x$ iff $\gamma^k(\kappa(x)) = \kappa(x)$, so that there is a one-to-one correspondence between the fixed/periodic points of $\alpha$ and $\gamma$.  In particular this implies that the two functions have isomorphic graphs.
  \end{proof}

\begin{proof}[Proof of Theorem~\ref{thm:units}]
  By Lemma~\ref{lem:cyclic}, the graph $\units{p^k}$ is isomorphic to $\add{\phi(p^k)}$ and the result follows.
\end{proof}

Basically we just need to understand $\add m$.  Under addition, this is a cyclic group of order $m$, with generator $1$.  We stress here that we are now moving to {\em additive notation} for simplicity, so here the identity is $0$ and the generator is $1$; when we convert this back to the multiplicative group $\units{p^k}$ we will have an identity of $1$ and a generator of $g$.
 Some standard results from group theory~\cite[\S 2.3]{dummit1991abstract} tell us that: 
\be
\ii For any $d$ dividing $m$, the set of all elements with (additive) order $d$ is given by $$S_d = \{j (m/d): j\mbox{ such that }\gcd(j,d)=1\};$$
\ii The set $S_d$ has $\phi(d)$ elements;
\ii Since $\sum_{d|m}\phi(d) = m$, these sets exhaust $\zm$.
\ee

Let us first consider the case with $m$ odd:

\begin{prop}
  Let $m$ be odd, and consider the map $f(x)=2x$ on $\zm$.   Then this map has a unique fixed point at $x=0$, and all other elements are periodic under this map.  The map $f$ leaves each of the $S_d$ invariant, and  $S_d$ decomposes into a disjoint union of periodic orbits, each with period $\ord_d(2)$.  (Note that since $\av{S_d} = \phi(d)$, there will be exactly $\phi(d)/\ord_d(2)$ such disjoint orbits and note that $\ordt d$ divides $\phi(d)$ by Fermat's Little Theorem.)
\end{prop}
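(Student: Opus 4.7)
The plan is to verify the claims in sequence, in each case exploiting that $m$ is odd. For the fixed point, $f(x)=x$ is equivalent to $2x\equiv x\pmod m$, i.e.\ $x\equiv 0\pmod m$, so $0$ is the unique fixed point. For periodicity, $\gcd(2,m)=1$ makes $f$ a bijection on $\zm$; since any bijection of a finite set decomposes into disjoint cycles, every element is periodic, and together with the previous computation every nonzero element lies on a cycle of length at least $2$.

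To show $f(S_d)=S_d$, I would appeal to the standard formula that if $x\in\zm$ has additive order $d$, then $2x$ has order $d/\gcd(d,2)$. Since $d\mid m$ and $m$ is odd, $d$ itself is odd, so $\gcd(d,2)=1$ and the order is preserved; hence $f(S_d)\subseteq S_d$, and bijectivity of $f$ upgrades this to equality.

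For the period on $S_d$, iterating gives $f^k(x)=2^k x$, so $x$ has period $k$ precisely when $(2^k-1)x\equiv 0\pmod m$. Writing a generic $x=j(m/d)\in S_d$ with $\gcd(j,d)=1$, this reduces to $d\mid (2^k-1)$, i.e.\ $2^k\equiv 1\pmod d$, whose minimal solution is by definition $\ord_d(2)$. Since this period does not depend on $j$, and $\av{S_d}=\phi(d)$, the set $S_d$ breaks into $\phi(d)/\ord_d(2)$ cycles of equal length. The only subtle point --- and what I would flag as the main obstacle --- is the invariance step: it uses crucially that every divisor of the odd number $m$ is itself odd, because if $m$ were even the order could collapse (e.g.\ $2\cdot 2\equiv 0\pmod 4$) and $S_d$ would fail to be invariant. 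Everything else is routine bookkeeping.
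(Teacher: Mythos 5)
Your proof is correct and follows essentially the same route as the paper's: the fixed point by direct computation, periodicity from $\gcd(2,m)=1$ making $f$ a permutation, invariance of $S_d$ from the oddness of $d\mid m$, and the period via writing $x=j(m/d)$ and reducing $(2^k-1)x\equiv 0\pmod m$ to $d\mid(2^k-1)$. The only cosmetic difference is that you invoke the standard order formula $\ord(2x)=d/\gcd(d,2)$ for the invariance step where the paper argues directly that $d'(2x)\equiv 0$ iff $d'x\equiv 0$; both rest on the same fact that $2$ is a unit modulo the odd $m$.
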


\begin{proof}
We can see by inspection that no point other than $x=0$ is fixed by the map.  Now note that since $\gcd(2,m)=1$, the map $x\mapsto 2x$ is invertible, and is thus effectively a permutation.  Therefore all points are periodic under repeated application of this map.

To prove the second claim in the proposition, note that $S_d$ is the set of points in $\zm$ with additive order $d$, i.e. $x\in S_d$ iff $dx\equiv0\bmod m$ and if $0<d'<d$, $d'x\not\equiv 0\pmod m$.  This is invariant under the map $x\mapsto 2x$ since $m$ is odd:  if $d'(2x)\equiv 0\bmod m$ then $d'x\equiv 0 \bmod m$, etc. 

Let $x\in S_d$ with $f^k(x) = 2^kx\equiv x$, and if $x\in S_d$ then $x=j(d/m)$ for $\gcd(j,d)=1$.  This means that we have 
\begin{equation*}
  (2^k-1) j\frac m d \equiv 0\bmod m,
\end{equation*}
which is equivalent to saying that $(2^k-1)j/d\in \Z$.  Since $\gcd(j,d)=1$, this is true iff $d|(2^k-1)$.  

The period of $x$ is the smallest $k$ for which $2^kx=x$, which is the smallest $k$ for which $d|(2^k-1)$, and this is $\ord_d(2)$ by definition.
\end{proof}

We are now able to understand the cyclic group for more general $m$.

\begin{prop}
Let $m = 2^\theta \mu$ where $\mu$ is odd, and again consider the map $f(x)=2x\bmod m$.  Then the graph $\add m$ consists of flower cycles, specifically:  replace every cycle of length $\alpha$ in $\add\mu$ with a copy of $C_\alpha(T_2^\theta)$.
\end{prop}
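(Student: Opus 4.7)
The plan is to use CRT to split $\add m$ as a Kronecker product of $\add{2^\theta}$ and $\add\mu$, identify the first factor with the grounded tree $T_2^\theta$, and then show that Kronecker-producting $T_2^\theta$ with any cycle yields exactly the corresponding flower cycle. The previous proposition already describes $\add\mu$ as a disjoint union of cycles, so this would cover everything.

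First I would establish the doubling-map analog of Theorem~\ref{thm:kronecker}: since $\gcd(2^\theta,\mu)=1$, the CRT map $\zmod m\to\zmod{2^\theta}\times\zmod\mu$ is in particular an additive-group isomorphism, so it intertwines $x\mapsto 2x$ on the two sides. The same vertex/edge bookkeeping used to prove Theorem~\ref{thm:kronecker} then gives $\add m\cong\add{2^\theta}\kgp\add\mu$. Next I would check $\add{2^\theta}\cong T_2^\theta$ by inspection: the unique fixed point is $0$ (providing the loop at the root), odd residues are leaves since the image of doubling mod $2^\theta$ is precisely the even residues, each even $y=2z$ has preimages $\{z,z+2^{\theta-1}\}$, and writing $x=2^{\theta-d}u$ with $u$ odd one sees $x$ has depth $d$; this produces $2^{d-1}$ elements at each depth $d\ge 1$ and the single element $0$ at depth $0$, matching the layer counts and in-degree pattern of $T_2^\theta$ exactly.

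The core step is then to show $T_2^\theta\kgp C_\alpha\cong C_\alpha(T_2^\theta)$ for a single cycle $C_\alpha$. Write $r$ for the root of $T_2^\theta$ and $c_0,\dots,c_{\alpha-1}$ for the cyclic vertices, with all cycle indices understood modulo $\alpha$. The self-loop at $r$ forces $(r,c_i)\to(r,c_{i+1})$ in the product, so the vertices $(r,c_i)$ form a copy of $C_\alpha$. Taking non-cycle preimages of $(r,c_i)$: they are exactly the pairs $(t,c_{i-1})$ where $t$ is the unique depth-$1$ non-root vertex of $T_2^\theta$; taking preimages again yields pairs $(t',c_{i-2})$ with $t'$ at depth $2$; and so on. Because the shift in the second coordinate is forced by the depth in the first coordinate, the subgraph hanging off $(r,c_i)$ is isomorphic to $T_2^\theta$ with its root-loop replaced by the cycle edge from $(r,c_{i-1})$, which is precisely the construction of $C_\alpha(T_2^\theta)$.

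The main obstacle is this last identification; tracking how the tree-layers rotate around the cycle as preimages are iterated is the one spot where careful bookkeeping is required, but once the shift-by-depth pattern is written down it is a direct check. Summing over all cycles of $\add\mu$ (including the trivial cycle at $0$, for which $C_1(T_2^\theta)=T_2^\theta$) and using that Kronecker product distributes over disjoint unions in either factor then yields the claimed global description of $\add m$.
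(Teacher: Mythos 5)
Your proof is correct, but it takes a genuinely different route from the paper's. The paper argues directly inside $\Z/m\Z$: it first shows that the multiples of $\mu$ (the basin of $0$) form a $T_2^\theta$, then lifts each period-$k$ orbit $\{x_i\}$ of $\add\mu$ to the orbit $\{2^\theta x_i\}$ modulo $m$ and counts preimages by hand (each hit element has the two preimages $y$ and $y+m/2$) to conclude that a copy of $T_2^\theta$ hangs off every periodic point. You instead factor the problem into three modular pieces: the CRT splitting $\add m\cong\add{2^\theta}\kgp\add\mu$, the identification $\add{2^\theta}\cong T_2^\theta$, and the isomorphism $T_2^\theta\kgp C_\alpha\cong C_\alpha(T_2^\theta)$. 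All three are sound: the CRT map is in particular an additive isomorphism and so intertwines doubling; your depth computation for $\add{2^\theta}$ is exactly right, and since you prove it by direct inspection rather than citing Corollary~\ref{cor:2^k} (which the paper derives \emph{from} this proposition) there is no circularity; and your shift-by-depth identification is essentially the map $\psi(v,\beta)=(v,\beta-\mathcal{L}(v))$ that the paper later uses to prove item (4) of Proposition~\ref{prop:kron2}. What your route buys is modularity and transparency --- the $2$-part and the odd part are cleanly separated, and the appearance of $T_2^\theta$ is automatic rather than discovered by preimage-chasing. What the paper's route buys is self-containedness at this stage (it needs none of the Kronecker-product identities, which are only developed in Section~\ref{sec:computations}) together with explicit residue-level information, e.g.\ exactly which multiples of $\mu$ sit in which layer of the tree rooted at $0$.
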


\begin{proof}
  We first consider the basin of attraction of $0$, which consists of the multiples of $\mu$.  (Note that $f^\theta(x)=2^\theta x\equiv 0\bmod m$ for some $\theta$ iff $x$ is a multiple of $\mu$.)  In fact, we can see that if $x= 2^q \nu \mu$ with $q<\theta$ and $\nu$ odd, then $x$ reaches 0 in exactly $k-q$ iterations, each passing through higher orders of $2$.  For example, all of the odd multiples of $\mu$ will be in the top layer (there will be exactly $2^{\theta-1}$ of these); the odd multiples of $2\mu$ will be in the next layer (exactly $2^{\theta-2}$ of these), and so on.  Note that every vertex in this tree has in-degree 2 or 0:  nothing maps to the odd numbers in the top layer, but each subsequent number has two preimages:  given an $x$ such that $2y\equiv x\bmod m$, we also have $2(y+m/2) \equiv x\bmod m.$  So in general, we have a tree of type $T_2^\theta$ with $0$ at the root, the point $m/2 = 2^{\theta-1}\mu$ the single point in layer $1$, the points $\{1,3\}\times 2^{\theta-2}\mu$ in layer two (each of which map to $2^{\theta-1}\mu$), the points $\{1,3,5,7\}\times 2^{\theta-3}\mu$ in layer three, etc.  We will call this the ``tree rooted at 0'' below.
  
We now claim that for any periodic orbit that appears in $\add \mu$, there is a corresponding flower orbit in $\add m$.  Consider any period $k$ orbit in $\zmod \mu$ with elements $x_1,\dots, x_k$.  By definition, $x_{i+1}\equiv 2x_i\bmod \mu$ and $x_1 = 2x_k\bmod \mu$.  Now let $y_i = 2^\theta x_i$ and note that gives a periodic orbit modulo $m$:  if $x_{i+1}-2x_i$ is a multiple of $\mu$, then $y_{i+1}-2y_i = 2^\theta(x_{i+1}-x_i)$ is a multiple of $m=2^\theta \mu$.  Now pick any of the $y_i$.  We know that there is at least one $z\in \zm$ that maps to $y_i$ (since it is part of a periodic orbit).  Therefore there is another, $z_1 = z\pm n/2$ that maps to $y_i$, and this corresponds to the $n/2$ vertex in the tree rooted at 0.  If $z$ is odd, then the tree terminates, but if it is even, there are two numbers that map to $z_1$, namely $z_1/2$ and $z_1/2\pm n/2$.  As we move up the branches of this tree, each layer removes one power of two, and each vertex has exactly two vertices mapping to it, until we reach the $\theta$'th layer.  Therefore there will be a $T_2^\theta$ attached to $y_i$ --- and the same argument applies to any of the elements in any of the periodic orbits, and we are done.

\end{proof}

\begin{cor}\label{cor:2^k}
  If $m=2^k$ for some power of $k$, then the graph $\add{m}$ is a tree with $k$ layers.  The $k$th layer consists of the $2^{k-1}$ odd numbers, the $k-1$st layer is those  $2^{k-2}$ numbers that are two times an odd, etc.  In particular, the graph $\add{2^k}$ is a $T_2^k$.
\end{cor}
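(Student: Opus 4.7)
The plan is to derive this as a direct specialization of the preceding proposition to the case $\mu = 1$, $\theta = k$. Writing $m = 2^k = 2^\theta \mu$ with $\mu = 1$, the proposition says that $\add{m}$ is obtained from $\add{\mu}$ by replacing each cycle of length $\alpha$ by a copy of $C_\alpha(T_2^\theta)$. Now $\add{1}$ is the single-vertex graph consisting of the fixed point $0$ with a self-loop, which is a cycle of length $\alpha=1$. So first I would unwind $C_1(T_2^k)$: by Definition~\ref{def:flower-cycle}, one takes a single copy of $T_2^k$, removes the loop at its root, and replaces it with an edge from the root back to itself (since $1+1 \equiv 0 \bmod 1$). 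This restores the self-loop, so $C_1(T_2^k) = T_2^k$, giving $\add{2^k} \cong T_2^k$.

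Next I would verify the layer description by reading off exactly what the tree rooted at $0$ looks like, using the explicit description from the proof of the preceding proposition, specialized to $\mu=1$, $\theta=k$. There, it was shown that if $x = 2^q \nu \mu$ with $\nu$ odd and $q < \theta$, then $x$ sits in the layer that reaches $0$ in $\theta - q$ steps. Plugging in $\mu=1$ and $\theta=k$: the root is $0$; layer $1$ contains the single element $2^{k-1}$; layer $j$ for $1 \le j \le k$ consists of the numbers of the form $\nu \cdot 2^{k-j}$ with $\nu$ odd and $0 < \nu < 2^j$, of which there are exactly $2^{j-1}$. In particular, layer $k$ is the set of all $2^{k-1}$ odd numbers, layer $k-1$ is the set of $2^{k-2}$ odd multiples of $2$, and so on, matching the claim.

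I do not expect any substantive obstacle: the corollary is essentially a relabeling of the preceding proposition's output in the degenerate base case $\mu = 1$, where the entire graph reduces to the ``tree rooted at $0$'' that was already constructed in that proof. The only mildly subtle point is checking that the cycle-of-length-one convention gives $C_1(T_2^k) = T_2^k$ (i.e., that removing and re-adding the root self-loop is a no-op), which is immediate from the definition.
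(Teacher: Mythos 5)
Your proposal is correct and matches the paper's intent: the paper states this corollary without a separate proof, treating it as the immediate specialization of the preceding proposition to $\mu=1$, $\theta=k$, which is exactly what you carry out (including the observation that $C_1(T_2^k)=T_2^k$ and the layer count read off from the ``tree rooted at $0$'' construction). Your only deviation is a silent correction of the paper's typo ``$k-q$ iterations'' to ``$\theta-q$ iterations'' in the layer description, which is the right reading.
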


One direct corollary of this is that if $n$ is a Fermat prime, then the graph $\units{n}$ is a tree.

\subsection{The graph of nilpotent elements $\nilp{p^k}$ when $p$ is an odd prime}\label{sec:nilpotent}

Now we can attack the case of those elements that are not units when $p$ is an odd prime.

\begin{defn}
  We say that $x$ is {\bf nilpotent} modulo $n$ if there exists some power $a$ such that $x^a \equiv \bmod n$.  
\end{defn}

It is easy to see that $x$ is nilponent modulo $p^k$ iff $p|x$.  Also note that if $x^a\equiv 0\bmod n$ then $x^b\equiv 0\bmod n$ for any $b>a$, and in particular it is true for $b$ being some power of two, so we have: 

\begin{lem} 
$x$ is nilpotent modulo $p^k$ $\iff$ $p|x$ $\iff$ $f_{p^k}^\alpha(x) = 0$ for some $\alpha>0$.  In particular, this implies that $\nilp{p^k}$ is a (connected) tree.
\end{lem}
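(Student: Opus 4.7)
The plan is to prove the three biconditionals and the tree claim in turn, each with a very short argument since $p$ is prime.

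First, to establish that $x$ is nilpotent modulo $p^k$ iff $p\mid x$: the reverse direction is immediate, since if $x=p\ell$ then $x^k=p^k\ell^k\equiv 0\bmod p^k$. For the forward direction, suppose $x^a\equiv 0\bmod p^k$ for some $a\ge 1$, so that $p^k\mid x^a$. Then in particular $p\mid x^a$, and since $p$ is prime, $p\mid x$.

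Next, the equivalence $p\mid x \iff f_{p^k}^\alpha(x)=0$ for some $\alpha>0$: note that $f_{p^k}^\alpha(x)=x^{2^\alpha}\bmod p^k$. If $p\mid x$, then by the previous paragraph $x^k\equiv 0\bmod p^k$, and therefore $x^{2^\alpha}\equiv 0\bmod p^k$ for any $\alpha$ with $2^\alpha\ge k$, so we can just take $\alpha=\lceil\log_2 k\rceil$. Conversely, if $f_{p^k}^\alpha(x)=0$ then $x^{2^\alpha}\equiv 0\bmod p^k$, so $x$ is nilpotent and hence $p\mid x$.

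Finally, for the tree claim: $\gr{p^k}$ is a functional graph, so each vertex has out-degree exactly one, and each weakly connected component contains exactly one directed cycle. The vertex $0$ is a fixed point of $f_{p^k}$, giving a self-loop, and by the second equivalence every nilpotent $x$ reaches $0$ in finitely many iterations. This means all nilpotent elements lie in the same weak component as $0$, so $\nilp{p^k}$ is connected. Moreover, if some nilpotent $x\ne 0$ lay on a directed cycle, iterating $f_{p^k}$ starting at $x$ would never reach $0$, contradicting the second equivalence; thus the only cycle in this component is the self-loop at $0$. Hence $\nilp{p^k}$ is a grounded tree in the sense of Definition~\ref{def:flower-cycle}.

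I do not anticipate any real obstacle: all the content is elementary, and the only point worth stating carefully is that a functional graph in which every vertex eventually reaches a fixed point is automatically a tree rooted (with self-loop) at that fixed point, which follows from the one-cycle-per-component structure of functional graphs.
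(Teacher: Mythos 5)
Your proof is correct and follows essentially the same route the paper takes (the paper leaves the argument to the two sentences preceding the lemma: primality of $p$ gives the first equivalence, and $x^a\equiv 0$ implies $x^b\equiv 0$ for $b$ a power of two gives the second, with the tree structure following because every nilpotent element flows to the looped root $0$). The only microscopic nitpick is that $\lceil\log_2 k\rceil$ is $0$ when $k=1$, so you should take $\alpha=\max(1,\lceil\log_2 k\rceil)$ to respect the requirement $\alpha>0$.
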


Now that we have established that $\nilp{p^k}$ is a tree and we want to determine its structure.  It can actually have an interesting and complex structure, especially when $k$ is large.  Our main result is the following.

\begin{defn}\label{def:trees}\label{def:tree}
  Let $p$ be an odd prime.  Fix $\hx \in \units p = \{1,2,\dots,p-1\}$, and for each $\ell$ we define the tree $\tree p \hx \ell$ recursively:
  \be
  
  \ii If $\ell$ is odd: then $\tree p \hx\ell$ is just a single node with no edges;
  
  \ii If $\ell$ is even and $\hx$ has no preimages modulo $p$: then  then $\tree p \hx\ell$ is just a single node with no edges;
  
  \ii If $\ell$ is even and $\hx$ has two preimages modulo $p$: denote these preimages by $z_1$ and $z_2$, then $\tree p \hx\ell$ is a rooted tree with $2 p^{\ell/2}$ trees mapping into this root:
  \begin{equation*}
    \mbox{ $p^{\ell/2}$ copies of $\tree p {z_1}{\ell/2}$ and $p^{\ell/2}$ copies of $\tree p {z_2}{\ell/2}$.}
  \end{equation*}
  \ee
  (Note by Remark~\ref{rem:2or0} that this exhausts all possible cases.)
\end{defn}

\begin{thm}\label{thm:nilpotent}
  The graph $\nilp{p^k}$ is a rooted tree with $0$ as the root and $p^{k-1}$ vertices.  The in-neighborhood of 0 is constructed as follows:  for any $k/2\le \ell < k$ and for any $1 \le y \le p^{k-\ell}$ with $\gcd(y,p)=1$, attach a tree of type $\tree{p}{\hy}{\ell}$, where $\hy=y\bmod p$, to 0.  For any $x$ (where $x\bmod p = 0$ and $x\bmod{p^k}\neq0$), write $x$ as the (unique) decomposition $x = \tx p^\ell$ with $\gcd(\tx,p)=1$, and the in-neighborhood of $x$ is the tree  $\tree{p}{\hx}{\ell}$, where $\hx=\tx\bmod p$. 
\end{thm}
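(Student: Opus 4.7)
The plan is to analyze the preimages of each vertex in $\nilp{p^k}$ explicitly, then verify that the subtrees match the recursive construction of Definition~\ref{def:tree}. By the preceding lemma, $\nilp{p^k}$ is already known to be a tree rooted at $0$ whose vertex set is the $p^{k-1}$ multiples of $p$ in $\{0,1,\ldots,p^k-1\}$, so the content left is a preimage computation followed by an induction.

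First I would fix a non-zero vertex $x=\tx p^\ell$ with $1\le\ell\le k-1$ and $\gcd(\tx,p)=1$ and ask which $y$ satisfy $y^2\equiv x\pmod{p^k}$. Writing $y=\ty p^m$ with $\gcd(\ty,p)=1$, the congruence $\ty^2 p^{2m}\equiv \tx p^\ell\pmod{p^k}$ combined with a short $p$-adic valuation argument forces $2m=\ell$, so $\ell$ must be even and $m=\ell/2$; the remaining condition becomes $\ty^2\equiv\tx\pmod{p^{k-\ell}}$. Next I would appeal to Hensel's lemma (equivalently, to the cyclic structure of $(\zmod{p^{k-\ell}})^\times$ together with Remark~\ref{rem:2or0}): the equation $\ty^2\equiv\tx\pmod{p^{k-\ell}}$ is solvable iff $\hx=\tx\bmod p$ has preimages under squaring in $\units{p}$, and when it is, it has exactly two solutions $\ty\bmod p^{k-\ell}$, one congruent modulo $p$ to each of the two roots $z_1,z_2\in\units{p}$ of $\hx$.

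I would then count lifts: each solution mod $p^{k-\ell}$ lifts to exactly $p^{k-\ell/2}/p^{k-\ell}=p^{\ell/2}$ values of $\ty$ in $\{1,\ldots,p^{k-\ell/2}-1\}$, and hence to $p^{\ell/2}$ preimages $y=\ty p^{\ell/2}$ of $x$. Consequently $x$ has $0$ preimages when $\ell$ is odd or $\hx$ is a non-QR, and otherwise $2p^{\ell/2}$ preimages, split evenly into two groups of $p^{\ell/2}$ according to $\ty\bmod p\in\{z_1,z_2\}$. This is exactly the branching prescribed by Definition~\ref{def:tree}.

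The structural claim is then an induction on $\ell$: every preimage $y=\ty p^{\ell/2}$ has $p$-adic valuation $\ell/2<\ell$ and top digit $\ty\bmod p\in\{z_1,z_2\}$, so by the inductive hypothesis the subtree rooted at $y$ is $\tree{p}{\ty\bmod p}{\ell/2}$; assembling $p^{\ell/2}$ such copies for each of $z_1$ and $z_2$ produces $\tree{p}{\hx}{\ell}$ at $x$, with the base case $\ell=1$ trivial. For the root $0$ the same valuation analysis gives that its non-zero preimages are precisely the $y=\ty p^\ell$ with $\lceil k/2\rceil\le\ell\le k-1$ and $\gcd(\ty,p)=1$, $1\le\ty<p^{k-\ell}$, and the previous induction identifies each subtree as $\tree{p}{\ty\bmod p}{\ell}$, matching the statement. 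The main obstacle is bookkeeping rather than substance: correctly extracting $m=\ell/2$ from the valuation, matching the $p^{\ell/2}$ lifts against the $p^{\ell/2}$ copies required by the recursive definition, and threading the induction so that the ``top digit'' $\ty\bmod p$ lines up with the two allowed roots at every level; the number-theoretic inputs (Hensel and Remark~\ref{rem:2or0}) do all the real work.
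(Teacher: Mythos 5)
Your proposal is correct and takes essentially the same route as the paper: the paper's Lemma~\ref{lem:q2} is precisely your preimage computation (write $y=\ty p^{\ell/2}$, reduce to $\ty^2\equiv\tx\bmod p^{k-\ell}$, conclude that $x=\tx p^\ell$ has either no square roots or $2p^{\ell/2}$ of them split evenly over the two mod-$p$ roots $z_1,z_2$ of $\hx$), and the theorem then follows by the same recursive assembly. The only real difference is that you delegate the ``exactly two solutions mod $p^{k-\ell}$, one above each $z_i$'' step to Hensel's lemma, whereas the paper proves it by hand via the parameterization $\ty=z_1+\beta p$ and the claim that $2\xi+p\xi^2$ has a unique root; both are valid for odd $p$, and your explicit strong induction on $\ell$ usefully spells out a step the paper leaves implicit.
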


\begin{remark}
  The descriptions given in Theorem~\ref{thm:nilpotent} and Definition~\ref{def:trees} are recursive and not explicit --- although it is relatively easy to use these descriptions to compute $\nilp{p^k}$ when $k$ is not too large.  We give an equivalent, but more explicit, description below.
\end{remark}

Theorem~\ref{thm:nilpotent} basically follows from the following lemma:

\begin{lem}\label{lem:q2}
  Consider $n=p^k$, let $x=\tx p^\ell$ (where $\ell < k$) and consider the equation
\begin{equation}\label{eq:q2}
  q^2 = x\bmod{p^k}.
\end{equation} 
Write $\hx$ as the element of $\{1,2,\dots, p-1\}$ that is congruent to $\tx$ modulo $p$.  If $\ell$ is odd, or if $\ell$ is even and $\hx$ is a leaf in $\units p$, then~\eqref{eq:q2} has no solutions.  If there are two numbers $\widetilde z_1,\widetilde z_2$ such that $\widetilde z_i^2 = \hx\bmod p$, then~\eqref{eq:q2} has $2p^{\ell/2}$ solutions.  Half of these ($p^{\ell/2}$ of them) are solutions of the form $z p^{\ell/2}$ where $z\equiv z_1\bmod p$ and the other half are solutions of the form $z p^{\ell/2}$ where $z\equiv z_2\bmod p$.
\end{lem}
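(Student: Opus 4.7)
The plan is to analyze~\eqref{eq:q2} by $p$-adic valuation, reduce it to a square-root problem in $\units{p^{k-\ell}}$, and then combine Remark~\ref{rem:2or0} with a Hensel-type lifting to count the solutions precisely.

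First, lift $q$ to an integer in $\{0,\dots,p^k-1\}$ and write $q = p^v u$ with $\gcd(u,p) = 1$; the case $q = 0$ is ruled out because $\tx p^\ell \not\equiv 0 \bmod p^k$ when $\ell < k$.  Then
\[
  q^2 - \tx p^\ell = p^{2v} u^2 - \tx p^\ell,
\]
and since $u$ and $\tx$ are both coprime to $p$, the two summands have $p$-adic valuations $2v$ and $\ell$.  When these differ, the sum has valuation $\min(2v,\ell) \leq \ell < k$, so~\eqref{eq:q2} fails.  Hence $2v = \ell$ is forced, and in particular $\ell$ must be even.

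Assuming $\ell = 2v$, dividing through by $p^\ell$ reduces~\eqref{eq:q2} to $u^2 \equiv \tx \bmod p^{k-\ell}$.  By Remark~\ref{rem:2or0} applied to $\units{p^{k-\ell}}$, this congruence has $0$ or $2$ solutions modulo $p^{k-\ell}$.  To decide which case holds, I would show that $\tx$ is a square modulo $p^{k-\ell}$ if and only if $\hx$ is a square in $\units{p}$: one direction is immediate via reduction mod $p$, while the other is a standard Hensel-type lift in which a square root $z$ modulo $p^j$ is refined to $z + tp^j$ modulo $p^{j+1}$ by solving the linear congruence $2zt \equiv c \bmod p$, which is always solvable because $p$ is odd and $z$ is a unit.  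The same inductive lift shows that when the two solutions $U_1, U_2$ modulo $p^{k-\ell}$ exist, they reduce modulo $p$ to the two square roots $z_1, z_2$ of $\hx$ in $\units{p}$.

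Finally, each $q \in \{0,\dots,p^k-1\}$ of the form $q = u p^{\ell/2}$ is determined by $u$ modulo $p^{k-\ell/2}$, while the equation only constrains $u$ modulo $p^{k-\ell}$.  Each valid residue class modulo $p^{k-\ell}$ therefore expands to $p^{\ell/2}$ residue classes modulo $p^{k-\ell/2}$, all automatically coprime to $p$ since their mod-$p$ reductions agree with one of $z_1, z_2$.  This yields the claimed $2p^{\ell/2}$ total solutions in the non-leaf case, split evenly between $u \equiv z_1 \bmod p$ and $u \equiv z_2 \bmod p$.  The step I expect to be the real obstacle is the Hensel-type lift used in the middle paragraph; the valuation analysis and the counting of residue lifts are routine bookkeeping once that lifting is in hand.
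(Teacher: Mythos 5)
Your proposal is correct, but it follows a genuinely different route from the paper's. The paper simply posits that a solution has the form $y=\ty p^{\ell/2}$, reduces to $\ty^2\equiv\tx\bmod{p^{k-\ell}}$, and then counts directly: it parameterizes $\ty=z_1+\beta p$, observes that shifting $\beta$ by $p^{k-\ell-1}$ leaves the square unchanged, and proves that within one period the squares are pairwise distinct by showing $2\xi+p\xi^2\equiv 0\bmod{p^{k-\ell-1}}$ has only the root $\xi=0$; since the image then has the same cardinality as the target set $S_x$, surjectivity (i.e.\ existence of square roots) comes for free from the pigeonhole, with no appeal to Hensel's lemma. You instead make the valuation analysis explicit at the outset --- writing $q=p^vu$ and forcing $2v=\ell$ --- which is actually \emph{more} careful than the paper (which never justifies why a solution must have valuation exactly $\ell/2$, and handles the odd-$\ell$ case only implicitly), and then you get existence and the two-solution count from a standard Hensel lift of square roots from $\bmod\ p$ to $\bmod\ p^{k-\ell}$, finishing with the same $p^{\ell/2}$-fold lift count from residues $\bmod\ p^{k-\ell}$ to residues $\bmod\ p^{k-\ell/2}$. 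The trade-off: your version leans on a classical tool (the lift $2zt\equiv c\bmod p$, solvable since $p$ is odd and $z$ is a unit, which you correctly identify as the one step requiring a written induction) and cleanly disposes of the no-solution cases; the paper's version is self-contained and gets surjectivity by counting, but at the price of an unstated assumption about the shape of $q$. Your appeal to Remark~\ref{rem:2or0} is not circular (it rests on Theorem~\ref{thm:units}, proved independently via the cyclic group structure), though it is redundant once the Hensel lift is in place, since unique lifting of each of the two square roots of $\hx$ already yields exactly two solutions modulo $p^{k-\ell}$.
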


\begin{proof}
  Let us first consider the case where $\ell = 2$ as a warm-up; the general case is similar but has more details.
  
  \fbox{$\ell=2$.}  Let us write $x = \tx p^2$, $y=\ty p$, and assume that $y^2 = x\bmod {p^k}$.  This tells us that 
  \begin{align*}
    \ty^2 p^2 &\equiv \tx p^2 \bmod {p^k},\\
    \ty^2 &\equiv \tx \bmod{p^{k-2}}.
  \end{align*}
  Note that we have $1\le \tx < p^{k-2}$ and $1\le \ty < p^{k-1}$ --- this is the range of prefactors we can put in front of each of those powers.
   
  Let us write $\hx = \tx\pmod p$ and $\hy = \ty\pmod p$.  If we consider the last equation modulo $p$, this implies $\hy^2\equiv \hx\bmod p$.  This implies that if $\hx$ has no preimages in $\gr p$ then there is no solution.  Let us assume that $\hx$ has two preimages in $\gr p$, and pick one of them, say $z_1$.  If we can show that there are exactly $p$ solutions to the system of congruences
  \begin{equation*}
    \ty^2 \equiv \tx \bmod{p^{k-2}},\quad \ty\equiv z_1\bmod p,
  \end{equation*}
  then we have established the result of the theorem for $\ell=2$.  Let us define two sets:
  \begin{equation*}
    S_x := \{\tx: 1 \le \tx < p^{k-2} \land \tx \equiv \hx \bmod p\},\quad     S_y := \{\ty: 1 \le \ty < p^{k-1} \land \ty \equiv \hy \bmod p\}.
  \end{equation*}
  We can see that $\av{S_x} = p^{k-3}$ and $\av{S_y} = p^{k-2}$, and of course $pS_y$ maps into $p^2S_x$ under the squaring map.  Thus each point in $S_x$ is hit an average of $p$ times, but we want to show that it is exactly $p$ times.  Let us parameterize $S_y$ by $\ty = z_1 + \beta p$, with $0\le \beta < p^{k-2}$.    We claim that as we run through these $\beta$, the values of $S_x$ each get hit exactly $p$ times.  First note that if we replace $\beta$ with $\beta+p^{k-3}$, then we get
  \begin{equation*}
    (z_1 + (\beta +p^{k-3})p)^2 \equiv (z_1+\beta p)^2\bmod p^{k-2},
  \end{equation*}
  so we only need consider $0\le \beta < p^{k-3}$.  We claim that these $\beta$ give distinct values modulo $p^{k-2}$, and since there are $p^{k-3}$ of them, they must cover the set $S_x$ --- and then shifting the arguments by $p^{k-3}$ gives the same values, and this gives us $p$ copies.  All we need to show now is that every element of $S_x$ gets hit once as $\beta$ runs from $0$ to $p^{k-3}-1$.  We compute:  
  \begin{align*}
    (1+\beta p)^2 &\equiv (1+\beta' p)^2&\pmod {p^{k-2}}\\
    1+2\beta p + \beta^2p^2 &\equiv 1+2\beta' p + (\beta')^2 p^2&\pmod {p^{k-2}}\\
    2\beta + \beta^2p &\equiv 2\beta' + (\beta')^2 p&\pmod {p^{k-3}},\\    
    2(\beta-\beta') + p(\beta^2-(\beta')^2)&\equiv 0&\pmod {p^{k-3}},
  \end{align*}
  so we nowneed to show that the map $2\xi + p\xi^2$ has only one root on $\zmod{p^{k-3}}$.  If we assume that $2\xi+p\xi^2\equiv 0\mod{p^{k-3}}$, then by taking modulo $p$, this gives $2\xi \equiv 0\bmod p$, and thus $\xi\equiv 0\bmod p$.  From this we have $p\xi^2\equiv 0\bmod {p^3}$, thus the same for $2\xi$, thus for $\xi$, etc.  From this we get that only $\xi=0$ can solve this equation, and we are done.
  
  \fbox{General $\ell$.}
  The argument is similar:  writing  $x = \tx p^\ell$, $y=\ty p^{\ell/2}$, and that $y^2 = x\bmod {p^k}$.  This tells us that 
  \begin{align*}
    \ty^2 p^\ell &\equiv \tx p^{\ell} \bmod {p^k},\\
    \ty^2 &\equiv \tx \bmod{p^{k-\ell}}.
  \end{align*}
  Note that we now have $1\le \tx < p^{k-\ell}$ and $1\le \ty < p^{k-\ell/2}$.   Again assume that $\hx$ has preimages in $\gr p$ and call one $z_1$. We now show that there are $p^{\ell/2}$ solutions to the system 
   \begin{equation*}
    \ty^2 \equiv \tx \bmod{p^{k-\ell}},\quad \ty\equiv z_1\bmod p.
  \end{equation*}
  Here we parameterize $\ty = z_1 + \beta p$ with $0\le \beta < p^{k-\ell/2-1}$.  We obtain the same repeating argument when adding a multiple of $p^{k-\ell-1}$, and so we only need to consider $0\le \beta < p^{k-\ell-1}$. But this will repeat $p^{(k-\ell/2-1)-(k-\ell-1)} = p^{\ell/2}$ times, so again we only need to show that every target gets hit in the cycle. But then the rest of the argument follows.

\end{proof}

\begin{proof}[Proof of Theorem~\ref{thm:nilpotent}]
  The lemma does most of the work for us here. Note that implicit in the lemma that for any $x$, if we write $x = \tx p^\ell$, then the basin of attraction of $x$ depends only on $\hx = \tx \bmod p$.  We can go through the cases:  if $\ell$ is odd, then $x$ has no preimages modulo $p^k$ by the lemma, and so it is a leaf in $\nilp{p^k}$ --- and similarly for $\ell$ even but $\hx$ having no preimages modulo $p$.
  
  Finally, for the neighborhood of 0:  note that if $\ell \ge k/2$, then $x^2\equiv 0 \bmod p^k$.  Thus any such point maps to 0 under squaring, and the preimage of each of these points is a tree of type $\tree p \hx \ell$. 
\end{proof}

Again it is worth pointing out that Lemma~\ref{lem:q2} tells us that basically all that matters is the mod $p$ value of the prefactor.  We can now define a non-recursive way to compute the trees defined in Definition~\ref{def:trees}.  We now describe an algorithm that will compute this tree exactly. We will state but not prove the algorithm.

\newcommand{\U}[3]{Z_{#1}({#2},{#3})}
\begin{defn}\label{def:unfurl}
For any $p$ prime, and $x\in\{0,\dots, n-1\}$, we define the {\bf unfurled preimage of $x$ of depth $\zeta$}, $\U p x \zeta$, as a directed tree.
The vertices of $\U p x \zeta$ are all sequences of the form $(a_1,\dots, a_\eta)$ with $\eta \le \zeta$ such that $a_1 = x$ and $a_{i-1} = a_i^2 \bmod p$.
The edges of $\U p x \zeta$ are
\begin{equation*}
  (a_1, a_2,\dots, a_{\eta-1}, a_\eta)\to(a_1,a_2,\dots, a_{\eta-1})
\end{equation*}
\end{defn}

\begin{remark}
It is always true that $\U p x \zeta$ forms a tree, since each edge moves from a sequence to a strictly smaller sequence and thus cannot have loops.  (The construction above is sometimes referred to as the ``universal cover of the graph $\units{p}$ rooted at $x$''.
\end{remark}

\begin{prop}
  $\U p x \zeta$ is always a tree with degree $2$.  If $x$ is a transient point $\bmod\ p,$ then $\U p x \zeta$ is a regular $2$-tree.  If $p$ is prime, and $p-1=2^\theta \mu$ with $\mu$ odd, and $\zeta \le \mu$, then $\U p x \zeta$ is a regular $2$-tree.  Otherwise, it is not a regular $2$-tree.
\end{prop}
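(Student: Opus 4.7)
My plan is to address each of the four assertions in turn, exploiting Theorem~\ref{thm:units} and the cyclic group structure of $\units p$.

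For the first two claims, I would argue directly from the definitions. That $\U p x \zeta$ is a tree follows from the preceding remark: every edge strictly shortens the defining sequence, so no directed cycle can close, and every vertex is connected to the root $(x)$ by the iteration of prefix-edges. For the degree bound, the children of $(a_1,\ldots,a_\eta)$ are indexed by the solutions $b$ to $b^2\equiv a_\eta\pmod p$; for an odd prime this equation admits at most two solutions for any fixed $a_\eta$, giving branching at most two.

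The regularity claims reduce to the question: for every vertex $(a_1,\ldots,a_\eta)$ with $\eta<\zeta$, is $a_\eta$ a nonzero quadratic residue modulo $p$? I would translate this into the cyclic-group picture by fixing a generator $g$ of $\units p$ and writing $x = g^a$; an iterated chain of $d-1$ square roots starting at $x$ exists iff the $2$-adic valuation of $a$ in $\zmod{p-1}$ is at least $d-1$. Since $p-1 = 2^\theta\mu$ with $\mu$ odd, this valuation saturates at $\theta$, and the periodic/transient dichotomy of Theorem~\ref{thm:units} becomes: periodic $x$ corresponds to $a$ divisible by $2^\theta$ (so $x$ lies in the odd-order subgroup of $2^\theta$-th powers), while transient $x$ has strictly smaller $2$-adic valuation. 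With this dictionary, the transient case is handled by placing $x$ inside a tree $T_2^\theta$ attached to one of the cycles of $\units p$: tracing preimages walks upward through that tree, and at each intermediate layer the branching factor is exactly two, which the unfurled tree faithfully reproduces. For the small-$\zeta$ case, the key ingredient is that every cycle length $\ord_d(2)$ in $\units p$ divides $\mu$ (for $d\mid\mu$), so a preimage chain of length at most $\mu$ cannot be forced through a non-residue.

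For the ``otherwise'' direction, I would exhibit an explicit defining sequence $(a_1,\ldots,a_\eta)$ with $\eta<\zeta$ whose terminal entry $a_\eta$ has no preimages, i.e., is a layer-$\theta$ leaf of the appropriate $T_2^\theta$; this produces a non-leaf vertex of $\U p x \zeta$ with fewer than two children, contradicting regularity. The choice of chain is dictated by the exponent representation of $x$ and by Theorem~\ref{thm:units}, by following the ``non-cycle'' preimage at each step until we reach a leaf.

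The main obstacle is precisely this last step: cleanly exhibiting the first depth at which regularity fails, and showing it is strictly less than $\zeta$. The bookkeeping involves juggling the orbit length $\ord_d(2)$, the tree depth $\theta$, and the $2$-adic valuation of the exponent of $x$, and it is easy to confuse a genuine failure (a non-leaf with fewer than two children) with a legitimate leaf at depth exactly $\zeta$. I would also take extra care at the degenerate boundary $x\in\{0,1\}$, where the spine of $\U p x \zeta$ collapses to a single constant sequence and the branching analysis needs separate treatment.
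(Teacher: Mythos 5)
First, a point of comparison: the paper never actually proves this proposition --- it appears right after the sentence ``We will state but not prove the algorithm,'' and is followed only by an example --- so there is no official argument to measure your sketch against. Your opening moves are sound: the tree property follows because each edge strictly shortens the defining sequence, the in-degree bound follows because $b^2\equiv a\pmod p$ has at most two solutions for $p$ an odd prime, and passing to exponents via a generator $g$ of $\units p$, so that regularity becomes a statement about the $2$-adic valuation of the exponent modulo $p-1$, is exactly the right dictionary.

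The genuine gap is in your handling of the third clause, and it matters because that clause as printed appears to be misstated. Your proposed ``key ingredient'' --- that the cycle lengths $\ord_d(2)$ divide $\mu$, so a preimage chain of length at most $\mu$ ``cannot be forced through a non-residue'' --- is false. Regularity of $\U p x \zeta$ has nothing to do with the odd part $\mu$; it is controlled by $\theta$, the depth of the trees $T_2^\theta$ hanging off the cycles. Concretely, a periodic $x$ has one periodic preimage and one transient preimage $z_1$ sitting in layer $1$ of a $T_2^\theta$, and the branch of $\U p x \zeta$ through $z_1$ dies after exactly $\theta-1$ further steps; hence $\U p x \zeta$ is a perfect binary tree iff $\zeta\le\theta$. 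For a counterexample to the $\mu$ version, take $p=13$ (so $\theta=2$, $\mu=3$), $x=1$, $\zeta=3\le\mu$: the preimage chain $1\to 12\to 5$ stops at depth $2<\zeta$ because $5$ is a quadratic non-residue modulo $13$, while the all-ones spine continues, so the tree is not regular. The paper's own example ($p\equiv 3\bmod 4$, $\theta=1$, where the $-1$ node already terminates at depth $1$) confirms the intended threshold must be $\theta$, not $\mu$. Your valuation computation actually contains the fix --- you observe that the valuation ``saturates at $\theta$'' --- but you then abandon it in favor of the $\mu$ heuristic. Two smaller issues: the transient clause only holds under the reading ``perfect binary tree of some depth at most $\zeta$,'' since a transient $x$ in layer $j$ of its $T_2^\theta$ unfurls to a perfect tree of depth exactly $\min(\zeta,\theta-j)$ (both square roots of an exponent of valuation $t$ with $0<t<\theta$ again have valuation $t-1$, so all leaves occur at the same level), which can be shallower than $\zeta$ or even a single node; and your worry about $x=0$ is legitimate but out of scope, since $0$ has a unique square root and even the in-degree claim would fail there --- the proposition clearly intends $x\in\units p$.
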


\begin{exa}
  Assume that $p\equiv 3 \bmod 4$, so that $p-1 = 2\mu$.  Let us first compute $\U p 1 \zeta$.  Note that $1$ has two preimages: $1$ and $-1$, so $\U p 1 1$ is a regular $2$-tree.  Now, if we want to go to depth $\zeta=2$, we see that $-1$ has no preimage, but $1$ again has two, so the $-1$ note terminates, but the $1$ node bifurcates.  Again, going to depth $\zeta=3$, we again terminate at $-1$ and bifurcate at $1$.  Depending on the depth to which we want to unfurl this tree, we can continue as long as we'd like.  Also note that we are labeling each node by the last term in the sequence defining the node, and not the whole sequence itself (it is useful to think of just this last value, but of course the sequence is important in the definition to give unique descriptors for each node).    See Figure~\ref{fig:tree}.
  
  Now, consider any periodic $x$.  We claim that $\U p x \zeta$ will be isomorphic to $\U p 1 \zeta$.  To see this, note that since $x$ is periodic, it has two preimages:  one is the periodic $y_1$ with $y_1\to x$, and the other is the preperiodic $z_1$ with $z_1\to x$, so we get a regular $2$-tree at depth 1.  To go to depth $\zeta=2$, we have that $y_1$ is periodic so it again bifurcates into the periodic $y_2$ and perperiodic $z_2$,  whereas $z_1$ is a leaf in $\gr p$, so it does not.  
\end{exa}

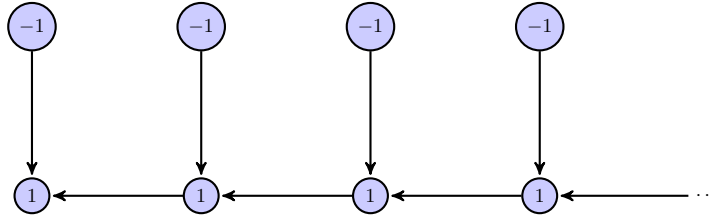
\begin{figure}
\begin{center}
\begin{tikzpicture}[->,>=stealth',shorten >=1pt,auto,node distance=3cm,
  thick,main node/.style={circle,fill=blue!20,draw,font=\sffamily\bfseries},transform shape, scale=0.75]

  \node[main node] (1) {$1$};
  \node[main node] (m1) [above  of=1] {$-1$};
  \node[main node] (11) [right of=1] {$1$};
  \node[main node] (m11) [above of=11] {$-1$};
  \node[main node] (111) [ right of=11] {$1$};
  \node[main node] (m111) [above of=111] {$-1$};
  \node[main node] (1111) [ right of=111] {$1$};
  \node[main node] (m1111) [above of=1111] {$-1$}; 
  \node(dots)[right of=1111]{$\cdots$};
  \path[every node/.style={font=\sffamily\small}]
    (m1) edge (1)
    (m11) edge (11)
    (m111) edge (111)
    (m1111) edge (1111)
    (11) edge (1)
    (111) edge (11)
    (1111) edge (111)
    (dots) edge (1111)
        ;
\end{tikzpicture}
\end{center}

\label{fig:tree}
\caption{Unfurling the case where $\theta=1$.}
\end{figure}

\begin{defn}[Layered trees and expansions]\label{def:expansion}
  We say that $T$ is a {\bf layered tree} with $L$ layers if $V(T)$ can be written as the disjoint union of $V_1, V_2,\dots, V_L$ and every edge in the graph goes from layer $k$ to layer $k-1$.  (This is basically the directed version of $L$-partite.)  Note that a vertex in a layer-$L$ tree is a leaf iff it is in layer $L$.

Given an $L$-layer tree $T$, and the integer vector ${\bf a} = (a_1,a_2,a_3,\dots,a_{L-1})$, we define $T^{({\bf a})},$ {\bf $T$ expanded by ${\bf a}$}, to be the following $L$-layer tree:

\be

\ii Let $v\in V(T)$ be a vertex in the $\ell$-th layer.  Then the $a_1\times a_2\times\cdots\times a_{\ell-1}$ vectors of the form
$$(v,(\alpha_1,\alpha_2,\dots, \alpha_\ell)), 1\le \alpha_i\le a_i$$
are all in the $\ell$-th later of $V(T^{({\bf a})})$.

\ii We define the edges of $T^{({\bf a})}$ by
\begin{equation*}
  (v,(\alpha_1,\alpha_2,\dots, \alpha_\ell)) \to (w,(\alpha_1,\dots, \alpha_{\ell-1})) \quad \iff \quad v\to w\mbox{ in } T.
\end{equation*}
\ee
\end{defn}

\begin{lem}\label{lem:layers}
  Let $n=p^k$, and let $x=\tx p^\ell$ for $\ell < k$, and $\hx = \tx \bmod p$.  Write $\ell = 2^\zeta \lambda$, where $\lambda$ is odd.   Define $\U p \hx \zeta$ as in Definition~\ref{def:unfurl} and expand this graph by $(p^{\ell/2}, p^{\ell/4}, p^{\ell/8},\dots, p^{2\lambda}, p^\lambda)$.  Then this graph is isomorphic to the basin of attraction of $x$ in $\nilp{p^k}$.
\end{lem}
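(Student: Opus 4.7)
The plan is to induct on $\zeta$, the $2$-adic valuation of $\ell$. The base case $\zeta=0$ means $\ell=\lambda$ is odd, in which case Lemma~\ref{lem:q2} guarantees that $x$ has no preimages in $\nilp{p^k}$, so the basin is the single vertex $\{x\}$; on the other side, $\U{p}{\hat x}{0}$ is a trivial one-vertex tree with empty expansion vector, and the two match tautologically.

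For the inductive step with $\zeta\ge 1$, I would apply Lemma~\ref{lem:q2} directly to $x$. If $\hat x$ has no square root modulo $p$, then $x$ has no preimages and both sides reduce to single-vertex trees. Otherwise $\hat x$ has exactly two square roots $\hat z_1,\hat z_2\in\{1,\dots,p-1\}$, and Lemma~\ref{lem:q2} says that the set of preimages of $x$ is exactly $\{y = \tilde y\, p^{\ell/2} : \tilde y \bmod p \in \{\hat z_1,\hat z_2\}\}$, partitioned into two groups of $p^{\ell/2}$ elements each according to which $\hat z_i$ it reduces to modulo $p$. Each such preimage $y$ has exponent $\ell/2 = 2^{\zeta-1}\lambda$, so by the inductive hypothesis its own basin in $\nilp{p^k}$ is isomorphic to $\U{p}{\hat y}{\zeta-1}$ expanded by $(p^{\ell/4},p^{\ell/8},\dots,p^\lambda)$.

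To assemble the two sides, I would exploit the recursive structure of the unfurl: $\U{p}{\hat x}{\zeta}$ decomposes as a root $(\hat x)$ together with two incoming subtrees rooted at $(\hat x,\hat z_1)$ and $(\hat x,\hat z_2)$, and the subtree at $(\hat x,\hat z_i)$ is canonically isomorphic (via the shift $(\hat x,\hat z_i,\dots)\mapsto(\hat z_i,\dots)$) to $\U{p}{\hat z_i}{\zeta-1}$. Under expansion by $(p^{\ell/2},p^{\ell/4},\dots,p^\lambda)$, the leading factor $p^{\ell/2}$ replicates each of these two subtrees $p^{\ell/2}$ times, and the remaining factors $(p^{\ell/4},\dots,p^\lambda)$ recursively expand each subtree in the manner called for by the inductive hypothesis. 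Thus the expanded unfurl equals: the root $x$, together with $p^{\ell/2}$ copies of (the basin of a $\hat z_1$-preimage) and $p^{\ell/2}$ copies of (the basin of a $\hat z_2$-preimage), each glued into $x$ by one edge. The bijection with the actual basin sends each preimage $y$ of $x$ to the appropriate $(\hat z_i,\text{lift-index})$-labelled copy, and below that uses the inductive isomorphism.

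The main obstacle will be the indexing bookkeeping around Definition~\ref{def:expansion}: I need to verify carefully that the leading coordinate $a_1=p^{\ell/2}$ of the expansion vector truly corresponds to ``replicate the two top subtrees $p^{\ell/2}$ times each,'' and that the shifted vector $(p^{\ell/4},\dots,p^\lambda)$ is precisely the expansion called for by applying the inductive hypothesis to $\U{p}{\hat z_i}{\zeta-1}$ at a preimage with exponent $\ell/2$. Once this indexing is pinned down, the remainder of the argument is a mechanical recursive gluing using Lemma~\ref{lem:q2}.
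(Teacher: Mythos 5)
Your plan is sound, and it supplies something the paper itself does not: Lemma~\ref{lem:layers} is introduced with the remark that the algorithm will be ``stated but not proved,'' so there is no proof in the paper to compare against. The induction on $\zeta$ you describe --- use Lemma~\ref{lem:q2} to identify the $2p^{\ell/2}$ preimages of $x$, sort them by whether they reduce to $\hat z_1$ or $\hat z_2$ modulo $p$, apply the inductive hypothesis at exponent $\ell/2$ with the shifted expansion vector, and glue --- is exactly the mechanism the paper deploys informally in its worked example of $\tree{3}{1}{8}$ inside $\nilp{3^{11}}$, where the factors $3^4,3^2,3$ enter level by level just as your inductive step predicts. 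The bookkeeping you flag as the remaining obstacle is real, but it is an issue with the paper's definitions rather than with your argument: Definition~\ref{def:unfurl} should require $\eta\le\zeta+1$ rather than $\eta\le\zeta$ (as written, $Z_p(1,1)$ would be a single vertex, contradicting the paper's own example), and with that repair $Z_p(\hx,\zeta)$ becomes a $(\zeta+1)$-layer tree so that the length-$\zeta$ expansion vector is admissible in Definition~\ref{def:expansion}; moreover Definition~\ref{def:expansion} must be read as tolerating leaves in intermediate layers (the unfurled trees have them whenever some $\hat z_i$ is a quadratic nonresidue), with a layer-$j$ vertex replicated $a_1\cdots a_{j-1}$ times whether or not it is a leaf. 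Once those conventions are fixed, your identification of ``expand by $(a_1,\dots,a_{L-1})$'' with ``replicate each depth-one subtree $a_1$ times, then expand each copy by the tail of the vector'' follows directly from the edge rule in Definition~\ref{def:expansion}, and the induction closes.
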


\subsection{The graph $\gr{2^k}$}\label{sec:2^k}

We have dealt with all powers of odd primes above, but still need to deal with $n=2^k$.  This will be a bit different than what has gone before, both for the units and the nilpotents, but it will be similar enough that we can reuse some earlier ideas here.  First, a preliminary result that we can prove pretty simply.

\begin{prop}
  Let $n=2^k$.  Then $\gr{2^k}$ has exactly two components --- one corresponding to $\units{2^k}$ and the other to $\nilp{2^k}$, and they are both trees.  (Equivalently, the graph $\gr{2^k}$ has exactly two fixed points:  $0$ and $1$, and no periodic points.)
\end{prop}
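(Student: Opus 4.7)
The plan is to prove the two equivalent statements in tandem: (i) show that the odd residues and the even residues each form a forward‑invariant subset, so $\gr{2^k}$ has at least two components; (ii) show that each of these invariant pieces is a tree, equivalently that it has a unique fixed point and no other periodic points.

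\textbf{Nilpotent side.} If $x$ is even, write $x=2^a u$ with $u$ odd and $a\ge 1$. Then $x^2 = 2^{2a}u^2$, so each squaring at least doubles the $2$‑adic valuation. After finitely many iterations the valuation exceeds $k$, hence the orbit reaches $0$. So $0$ is the only fixed point among the even residues and every even residue lies in its basin; therefore $\nilp{2^k}$ is a connected tree rooted at $0$.

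\textbf{Unit side.} Here I would invoke the standard structure theorem for $(\Z/2^k\Z)^\times$ (which for odd primes we replaced by Gauss's cyclicity theorem, but here is different): for $k\ge 3$,
\begin{equation*}
  (\Z/2^k\Z)^\times \;\cong\; \Z/2\Z \,\oplus\, \Z/2^{k-2}\Z,
\end{equation*}
with $-1$ generating the first factor and $5$ generating the second. Under this isomorphism (written additively on the right), the squaring map becomes the doubling map $(a,b)\mapsto (2a,2b)=(0,2b)$, which kills the first coordinate and doubles in the second. The doubling map on $\Z/2^{k-2}\Z$ is precisely $\add{2^{k-2}}$, and by Corollary~\ref{cor:2^k} this is a tree rooted at $0$ with no periodic points other than $0$. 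Composing, every element of $(\Z/2^k\Z)^\times$ is eventually carried to $(0,0)$, which corresponds to $1$; and $1$ is the only fixed point. Hence $\units{2^k}$ is a tree rooted at $1$. The cases $k=1$ and $k=2$ are checked by direct inspection.

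\textbf{Conclusion.} Since the odd and even residues are each forward‑invariant, and each of $\nilp{2^k}$ and $\units{2^k}$ has been shown to be a tree with a single fixed point (at $0$ and $1$ respectively) and no other periodic points, the graph $\gr{2^k}$ splits into exactly these two components, has exactly two fixed points $0$ and $1$, and contains no further periodic points.

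\textbf{Main obstacle.} The one nontrivial input is the fact that $(\Z/2^k\Z)^\times$ is not cyclic for $k\ge 3$, so the clean reduction to $\add{\phi(p^k)}$ used in the odd‑prime case via Lemma~\ref{lem:cyclic} no longer applies directly. The trick is to exploit the finer decomposition $\Z/2 \oplus \Z/2^{k-2}$, recognize that squaring collapses the $\Z/2$ factor to $0$, and then apply Corollary~\ref{cor:2^k} to the remaining $\Z/2^{k-2}$ summand.
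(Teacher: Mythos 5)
Your proof is correct, and the nilpotent half is essentially the paper's argument (squaring at least doubles the $2$-adic valuation, so every even residue falls into $0$). Where you genuinely diverge is on the unit side. The paper avoids any group-structure input: it shows directly that $x\equiv 1\pmod{2^\ell}$ implies $x^2\equiv 1\pmod{2^{\ell+1}}$ (expand $x=1+\alpha 2^\ell$ and square), so every odd number, being $1\bmod 2$, reaches $1$ in at most $k-1$ squarings. You instead invoke the decomposition $(\Z/2^k\Z)^\times\cong \Z/2\Z\oplus\Z/2^{k-2}\Z$, observe that squaring becomes $(a,b)\mapsto(0,2b)$, and reduce to the doubling map $\add{2^{k-2}}$ via Corollary~\ref{cor:2^k}. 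Both are valid. The paper's route is cheaper for this particular proposition --- it is a three-line computation needing nothing beyond the binomial expansion --- whereas yours front-loads the structure theorem. But your route buys more: it is essentially the mechanism the paper deploys immediately afterwards (Lemma~\ref{lem:3} and Proposition~\ref{prop:units2k}, stated there with $3$ rather than $5$ as the generator of the index-two cyclic part) to pin down the exact shape of $\units{2^k}$, including the in-degrees and the extra leaves coming from the $\Z/2\Z$ factor. So your argument proves the proposition and simultaneously sets up the finer description, at the cost of not being self-contained. One small point worth making explicit if you keep your version: the two components really are exactly two because parity is preserved by squaring and each parity class collapses onto a single looped root, which you do state in your conclusion.
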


\begin{proof}
  The proof of the claim for $\nilp{2^k}$ is similar to that for odd primes.  The nilpotent elements modulo $2^k$ are exactly the even numbers, and when these are raised to a high enough power, they will be 0 modulo $2^k$.  Now we consider the units, which are the odd numbers.  Let $x\equiv 1\bmod 2^\ell$ with $1\le \ell < k$.  Then $x^2 = 1 + \alpha 2^\ell$, and
  \begin{equation*}
    x^2 = (1 + 2\alpha 2^\ell + \alpha^2 2^{2\ell}) = 1+\alpha 2^{\ell+1} + \alpha^2 2^{2\ell} \equiv 1\pmod {2^\ell+1}.
  \end{equation*}
  Since the power has increased, from this we see that any odd number will be 1 modulo $2^k$ in at most $k-1$ steps, and we are done.
\end{proof}

From this we just need to characterize each of the two trees.  We consider $\units{2^k}$ first.  One complication here is that the units modulo $2^k$ do not form a cyclic group under multiplication, but we get something that is close enough to make it work:

\begin{lem}\label{lem:3}\cite{gauss1986english}
  The powers of 3 modulo $2^k$ form a cyclic group of size $2^{k-2}$ under multiplication.   If we write $S_k$ as the set 
  \begin{equation*}
    \{3^q\bmod 2^k: 0\le q < 2^{k-2}\},
  \end{equation*}
  then the units modulo $2^k$ (basically the odd numbers) can be written as $S_k \cup -S_k$.
\end{lem}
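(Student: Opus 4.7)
The plan is to establish two facts: (i) the multiplicative order of $3$ modulo $2^k$ is exactly $2^{k-2}$ (so $|S_k|=2^{k-2}$ and $S_k$ is a genuine cyclic subgroup), and (ii) the sets $S_k$ and $-S_k$ are disjoint, after which the union must exhaust the units purely for cardinality reasons.

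For (i), I would argue by induction on $k\ge 3$ (the case $k=2$ is trivial since $\phi(4)=2$) that one can write
\begin{equation*}
  3^{2^{k-2}} = 1 + 2^k\cdot a_k, \qquad a_k\text{ odd}.
\end{equation*}
The base case $k=3$ is just $3^2 = 9 = 1 + 1\cdot 8$. For the inductive step, squaring the above identity yields
\begin{equation*}
  3^{2^{k-1}} = (1 + 2^k a_k)^2 = 1 + 2^{k+1}\bigl(a_k + 2^{k-1}a_k^2\bigr),
\end{equation*}
and the inner quantity is odd because $2^{k-1}a_k^2$ is even for $k\ge 2$. This gives tight control on the $2$-adic valuation: $3^{2^{k-2}}-1$ is divisible by $2^k$ but not by $2^{k+1}$. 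Consequently $3^{2^{k-2}}\equiv 1\pmod{2^k}$ while $3^{2^{k-3}}\not\equiv 1\pmod{2^k}$. Since every element of $(\Z/2^k\Z)^\times$ has order dividing $\phi(2^k)=2^{k-1}$, the order of $3$ is itself a power of $2$, and the sandwich above forces it to be exactly $2^{k-2}$.

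For (ii), I would reduce everything modulo $8$. Since $3^2\equiv 1\pmod 8$, every power of $3$ is congruent to either $1$ or $3$ modulo $8$, so $S_k\subseteq\{1,3\}\pmod 8$ while $-S_k\subseteq\{5,7\}\pmod 8$. These residue pairs are disjoint, so $S_k\cap(-S_k)=\emptyset$. Combined with (i) this yields
\begin{equation*}
  |S_k\cup(-S_k)| = 2\cdot 2^{k-2} = 2^{k-1} = \phi(2^k),
\end{equation*}
which is the total number of units modulo $2^k$, so the union must coincide with the full unit group.

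The main subtlety is the inductive lifting step in (i): the point is not merely that $3^{2^{k-2}}\equiv 1\pmod{2^k}$, but that the $2$-adic valuation of $3^{2^{k-2}}-1$ is \emph{exactly} $k$. Without this sharper quantitative statement one cannot rule out that the order of $3$ collapses to some smaller power of $2$, and this is the only place where we really use the special behavior of the prime $2$ (analogous to why the units mod $2^k$ fail to be cyclic). Everything else, including the modulo-$8$ disjointness, is a one-line calculation.
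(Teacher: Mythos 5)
Your proof is correct. Note that the paper itself gives no proof of this lemma at all --- it is stated with a citation to Gauss's \emph{Disquisitiones} --- so there is nothing to compare against; what you have written is the standard, complete argument: the inductive computation showing $3^{2^{k-2}}=1+2^k a_k$ with $a_k$ odd pins down the $2$-adic valuation of $3^{2^{k-2}}-1$ as exactly $k$, which (together with the order being a power of $2$) forces $\ord_{2^k}(3)=2^{k-2}$; the reduction mod $8$ gives disjointness of $S_k$ and $-S_k$; and the cardinality count $2\cdot 2^{k-2}=\phi(2^k)$ finishes it. You correctly identify the one genuinely delicate point, namely that congruence $3^{2^{k-2}}\equiv 1\pmod{2^k}$ alone would not exclude a smaller order. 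The only quibble: your aside that ``the case $k=2$ is trivial since $\phi(4)=2$'' glosses over the fact that the order of $3$ modulo $4$ is $2$, not $2^{k-2}=1$, so the first sentence of the lemma is really a statement about $k\ge 3$ (the decomposition $S_2\cup(-S_2)=\{1,3\}$ does still hold). Your base case at $k=3$ is the right starting point, so this does not affect the argument.
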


Using Lemma~\ref{lem:3}, we can characterize $\units{2^k}$:

\begin{prop}\label{prop:units2k}
  The graph of $\units{2^k}$ is a tree constructed in the following manner:  Start with the tree $T_2^k$, and for every node that is not a leaf, add two leaves directly mapping into it.
\end{prop}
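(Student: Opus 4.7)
The plan is to decompose $\units{2^k}$ via Lemma~\ref{lem:3} as $\units{2^k} = S_k \sqcup (-S_k)$, where $S_k = \langle 3 \rangle$ is cyclic of order $2^{k-2}$, and then analyze the squaring map on the two pieces separately. Applying Lemma~\ref{lem:cyclic} to $S_k$, the squaring map restricted to $S_k$ is conjugate to the doubling map on $\zmod{2^{k-2}}$, whose graph by Corollary~\ref{cor:2^k} is exactly $T_2^{k-2}$. Hence the induced subgraph of $\units{2^k}$ on $S_k$ is a copy of $T_2^{k-2}$ rooted at $1 = 3^0$. (I take the statement of the proposition to mean starting from this $T_2^{k-2}$; the bare symbol ``$T_2^k$'' would give too many vertices to match $|\units{2^k}| = 2^{k-1}$.)

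Next I would turn to $-S_k$. Because $(-x)^2 = x^2$, every element $-x \in -S_k$ squares to the same target in $S_k$ as $x$ does; in particular the image of the squaring map on $\units{2^k}$ lies entirely inside $S_k$, so every element of $-S_k$ is a leaf of $\units{2^k}$.

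It remains to count, for each $x = 3^q \in S_k$, how many preimages from $\units{2^k}$ attach to $x$ beyond those already recorded by the $T_2^{k-2}$ structure. The equation $y^2 = 3^q$ restricted to $S_k$ amounts to $2r \equiv q \bmod 2^{k-2}$: this has no solutions when $q$ is odd (precisely the leaves of $T_2^{k-2}$) and exactly two solutions when $q$ is even (precisely the non-leaves). Via the bijection $y \mapsto -y$ between $S_k$ and $-S_k$, the same count governs preimages in $-S_k$. So each non-leaf of $T_2^{k-2}$ acquires exactly two additional preimages from $-S_k$, while leaves acquire none --- exactly the construction described in the proposition. The main thing to verify carefully is that these ``new'' preimages really are distinct vertices, which boils down to $S_k \cap (-S_k) = \emptyset$ (built into the counting in Lemma~\ref{lem:3}) together with the fact that the two solutions $r, r + 2^{k-3}$ modulo $2^{k-2}$ are themselves distinct whenever $k \ge 3$; the degenerate small cases $k = 1, 2$ can be checked by direct computation.
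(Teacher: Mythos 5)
Your proof is correct and follows essentially the same route as the paper's: decompose the units as $S_k \cup (-S_k)$ via Lemma~\ref{lem:3}, identify the squaring map on $S_k$ with $\add{2^{k-2}} \cong T_2^{k-2}$, and observe that every element of $-S_k$ is a leaf that squares to the same target as its negative, so each non-leaf gains exactly two extra leaf preimages. Your correction of the index (starting from $T_2^{k-2}$, not $T_2^k$, to match $|\units{2^k}| = 2^{k-1}$) is right --- the paper's statement and proof carry the same off-by-two slip, as the depth-$5$ tree in Figure~\ref{fig:units128} for $k=7$ confirms --- and your explicit checks that $S_k \cap (-S_k) = \emptyset$ and that the two preimages in each piece are distinct are details the paper glosses over.
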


\begin{proof}

Since $S_k$ forms a cyclic subgroup of the units of size $2^{k-2}$, the graph of these numbers can be realized as $\add{2^{k-2}}$, which by Corollary~\ref{cor:2^k} is $T_2^k$.  Note that all of the other units are negatives of powers of $3$, and under squaring the negative sign cancels.  So, for example, if $x\in\units{2^k}$ is such that $z_1^2\equiv  z_2^2\equiv x\bmod{2^k}$, then we also have $(-z_1)^2\equiv  (-z_2)^2\equiv x\bmod{2^k}$, so that $x$ now has in-degree four.  Moreover, if there is any leaf in the $\add{2^{k-2}}$, i.e. a number in $S_k$ that is not a square modulo $2^k$, then there can be no number in $-S_k$ that squares to it, since the negative of that number would be in $S_k$.  Therefore if a number is a leaf in the original tree, then it remains a leaf when we add in the $-S_k$ terms as well.
\end{proof}

See Figure~\ref{fig:units128} for an example with $n=2^7$.  We can also describe the graph $\units{2^k}$ intuitively as follows:  Start with the node at $1$ and add a loop;   add three nodes:  $2^{k}-1$ and $2^{k/2}-1$, which will be leaves, and the node $2^{k/2}+1$, which will itself be a tree; to $2^{k/2}+1$, we add four inputs:  the leaves $2^{k/4}-1$ and $3\cdot 2^{k/4}-1$, and two trees: $2^{k/4}+1$ and $3\cdot 2^{k/4}+1$, etc.

\begin{figure}[th]
\begin{center}
\end{center}
\includegraphics[width=0.95\textwidth]{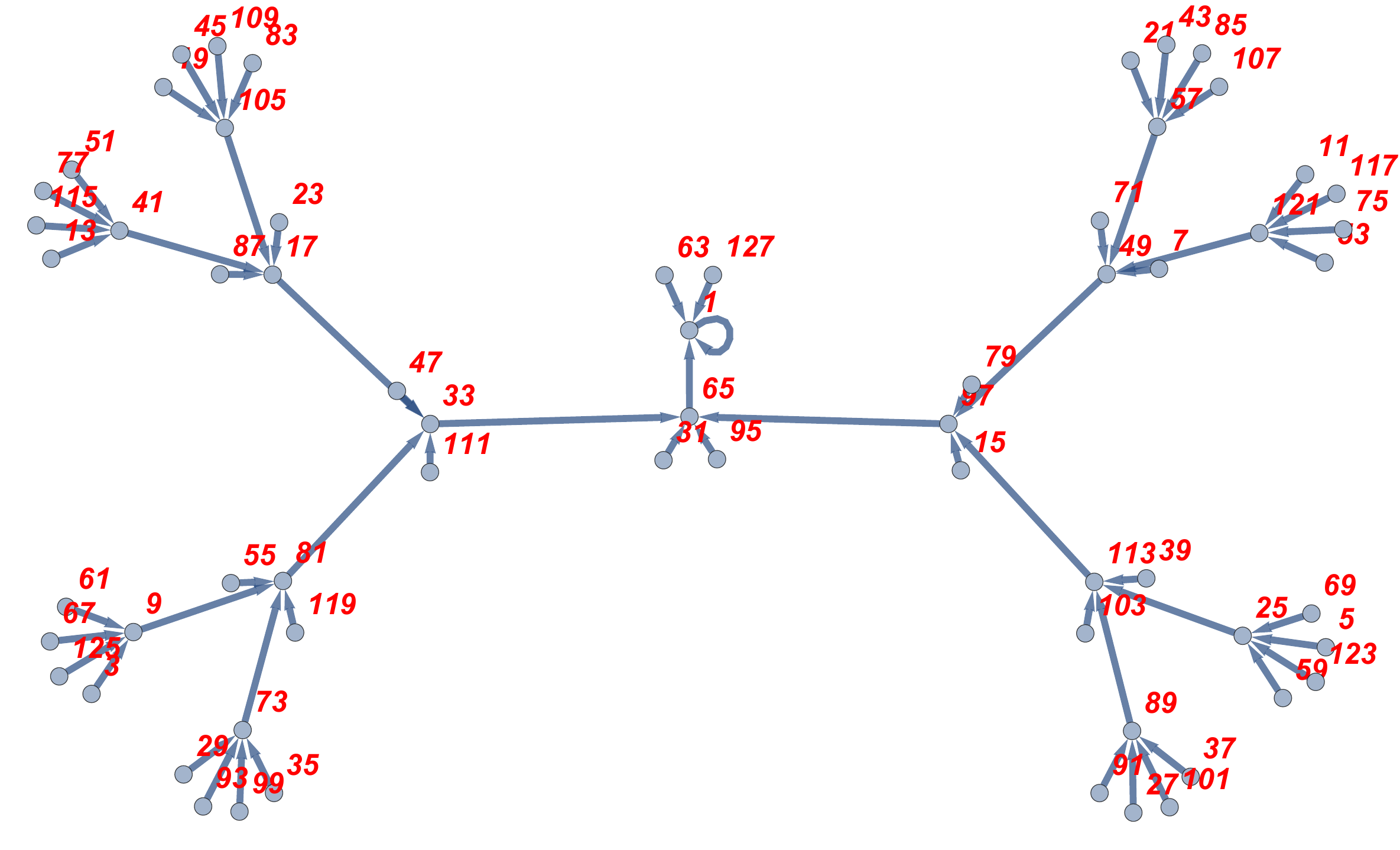}
\caption{The graph $\units{128}$.  Note that it is a graph of depth $5$ (because the cyclic subgraph generated by powers of $3$ is of order $32$, but instead of being a $T_2^5$, we add two leaves to each non-left node.  So for example without the additional leaves, $65 = 2^6+1$  would have two preimages: $33=2^5+1$ and $97 = 3\cdot 2^5+1$, which themselves are trees. But we also have the two leaves  $31 = 2^5-1$ and $95 = 3\cdot 2^5-1$ as well. (And similarly for all of the other nodes.)}
\label{fig:units128}
\end{figure}

Now onto the nilpotent elements.  This case is a bit more complex than the case where $p$ is odd.  The main problem here is that there is no statement analogous to Lemma~\ref{lem:q2} --- the main idea there is that the basin of attraction of any point of the form $x=\tx p^\ell$ depends only on the value of $\hx = \tx \bmod p$, whenever $p$ is odd.  This is clearly not true in the case where $p=2$, since this would mean that the basin of attraction for all numbers of the form $\mbox{(odd)}*2^\ell$ would be the same, but this cannot be true since some odd numbers have square roots modulo $2^k$ and some do not (q.v.~the discussion of units above).  In particular, when computing the basin of attraction of a point $x = \tx p^\ell$, we didn't need to pay attention to the ``ambient space'' $p^k$, but unfortunately this breaks when $p=2$.    So in theory one can compute this tree for any given $n$, but we don't expect such a nice result as seen in Theorem~\ref{thm:nilpotent} or Lemma~\ref{lem:layers}.  We do have a partial result that we state without proof.

\begin{prop}
  Consider $n=2^k$, and let $x=\tx 2^\ell$, where $\tx\neq 1$.  Let us define $\theta_1,\theta_2$ by the equations
  \begin{equation*}
    \tx - 1 = 2^{\theta_1} \cdot q_1,\quad \ell = 2^{\theta_2}\cdot q_2,
  \end{equation*}
  where $q_1,q_2$ are odd.   Define $\theta = \min(\theta_1,\theta_2)$.  Define $T_{2}^\theta$ as in Definition~\ref{def:tree}, add on the extra nodes as we did in Proposition~\ref{prop:units2k}, and expand it as in Definition~\ref{def:expansion}, where we expand the first layer by $2^{\ell/2}$, the second layer by $2^{\ell/4}$, etc.  Then the basin of attraction of $x$ is isomorphic to this expanded graph.
\end{prop}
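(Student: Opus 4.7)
The plan is to prove this by iteratively computing preimages of $x = \tx 2^\ell$ in $\gr{2^k}$ one layer at a time, with each step contributing a $2$-adic lifting factor (which gives the per-layer expansion of Definition~\ref{def:expansion}) and a step inside a unit graph (which gives the underlying $T_2$-with-extra-leaves shape from Proposition~\ref{prop:units2k}).

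First I would prove a $p=2$ analog of Lemma~\ref{lem:q2}: if $y^2 \equiv x \pmod{2^k}$ with $x = \tx 2^\ell$ and $\ell < k$, then equating $2$-adic valuations forces $\ell$ even and $y = \ty 2^{\ell/2}$ with $\ty$ odd, reducing the equation to $\ty^2 \equiv \tx \pmod{2^{k-\ell}}$. Each odd class of $\ty$ modulo $2^{k-\ell}$ solving this congruence lifts to exactly $2^{(k-\ell/2)-(k-\ell)} = 2^{\ell/2}$ odd residues modulo $2^{k-\ell/2}$, and these in turn give distinct preimages of $x$ in $\gr{2^k}$. That multiplicity is precisely the first expansion factor $a_1 = 2^{\ell/2}$ in the statement. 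The number of odd square-root classes itself is read off of $\units{2^{k-\ell}}$ via Proposition~\ref{prop:units2k}: two from the underlying $T_2$-structure, two more from the $-S_k$ ``extra leaves,'' or zero if $\tx$ is a leaf there.

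Next I would iterate. Each preimage $y = \ty 2^{\ell/2}$ is of the same form as $x$ but with valuation $\ell/2$ instead of $\ell$, so the same lemma applies; its preimages come with multiplicity $2^{\ell/4}$, matching $a_2 = 2^{\ell/4}$. Continuing, the $i$th preimage layer carries the cumulative lifting factor $a_1 a_2 \cdots a_{i-1}$, which is exactly the expansion specified by Definition~\ref{def:expansion}. Stripping these lifting factors off, the combinatorial skeleton inherits two $T_2$-children plus two extra leaves at each non-leaf, i.e.\ exactly the $T_2^\theta$-with-extra-leaves shape.

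The main obstacle is pinning the stopping depth at $\theta = \min(\theta_1,\theta_2)$. One side is cheap: after exactly $\theta_2$ halvings, $\ell/2^{\theta_2} = q_2$ is odd, and the $2$-adic valuation step of the preimage lemma then forbids further preimages. The other side is more delicate: writing the running odd part in the $\pm 3^a$ form of Lemma~\ref{lem:3} and invoking the lifting-the-exponent identity $v_2(3^{2^j}-1) = j+2$, the height of $\tx$ above the root of $\units{2^{k-\ell}}$ is controlled by $v_2(\tx-1) = \theta_1$, so at most $\theta_1$ successive square-root extractions are available before one runs out of non-leaves. The recursion thus stops at whichever obstruction binds first, giving $\theta$ as claimed. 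I expect the real work to be the boundary bookkeeping: verifying that as the ambient modulus $2^{k-\ell(1-2^{-i})}$ shrinks under iteration, the odd parts we encounter retain enough ``closeness to $1$'' for the height-in-units count to genuinely survive to depth $\theta_1$, and that no spurious earlier truncation is introduced from the shrinking modulus or from the two-valued sign choice $\pm \ty$ interacting with the recursive $v_2(\cdot - 1)$ computation.
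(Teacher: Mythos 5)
First, note that the paper itself states this proposition explicitly \emph{without proof} (``We do have a partial result that we state without proof''), so there is no in-paper argument to compare against; your proposal has to stand on its own. Your overall strategy is the right one --- and essentially the only sensible one: reduce $y^2\equiv \tx 2^\ell\pmod{2^k}$ by $2$-adic valuation to $\ty^2\equiv\tx\pmod{2^{k-\ell}}$, count the $2^{\ell/2}$ lifts of each residue class to get the expansion factors of Definition~\ref{def:expansion}, read the branching off of $\units{2^{k-\ell}}$ via Lemma~\ref{lem:3}, and control the depth with $v_2(\ell)$ on one side and the $\pm 3^a$ decomposition plus lifting-the-exponent on the other. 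The first two paragraphs of your sketch are sound. One small correction: the relevant modulus \emph{grows} under iteration ($2^{k-\ell},\,2^{k-\ell/2},\,2^{k-\ell/4},\dots$), it does not shrink as your formula $2^{k-\ell(1-2^{-i})}$ suggests; this actually works in your favor, since the squareness criterion ($\equiv 1\bmod 8$) only gets easier to satisfy, but the formula as written is wrong.

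The genuine gap is exactly the part you defer to ``boundary bookkeeping,'' and it cannot be deferred, because carried out honestly it conflicts with the statement as literally read. Two concrete issues. (i) \emph{Depth.} Your own identity $v_2(3^{q}-1)=v_2(q)+2$ (for $q$ even) says that if $\tx=3^q$ then the number of successive square-root extractions available from $\tx$ in the unit graph is $v_2(q)=v_2(\tx-1)-2=\theta_1-2$, not $\theta_1$: one needs $\tx\equiv 1\bmod 8$ just to take the \emph{first} root. So the depth your argument produces is $\min(\theta_1-2,\theta_2)$ away from the small-modulus boundary, and you must either find where two extra levels come from (they do not, in general) or conclude that the proposition's $\theta$ needs reinterpretation. (ii) \emph{Root degree.} You write that the skeleton has ``two $T_2$-children plus two extra leaves at each non-leaf,'' i.e.\ in-degree $4$ at every non-leaf --- which is correct, since $\tx\neq 1$ is not the root of $\units{2^{k-\ell}}$ and every non-leaf non-root there has exactly four preimages --- but that is \emph{not} the shape ``$T_2^\theta$ plus two extra leaves,'' whose root has only $w-1=1$ tree-child and hence $3$ children total. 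A direct check makes both problems visible: for $x=36=9\cdot 2^2$ in $\gr{64}$ one finds eight preimages ($6,10,22,26,38,42,54,58$), all leaves, whereas the construction in the statement (with $\theta=\min(3,1)=1$) predicts six. So your sketch, pushed through, would disprove the literal statement rather than prove it; the missing work is to pin down the correct normalization of $\theta_1$ and of the root's in-degree, and only then will the induction you outline close.
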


\section{Specific Computations}\label{sec:computations}

\subsection{Computing the Kronecker products in practice}

As we have seen above, the type of graph structures we see for primes, or powers of primes, come in certain forms --- and the graphs for composite $n$ come from Kronecker products of these.  We have two main results in this section:  the first is that one can compute the Kronecker product ``component by component'', and the second one is a list of the typical graph products will we see in practice.

\begin{defn}
  If $G$ are $H$ are two graphs with disjoint vertex sets, then we define the {\bf disjoint union of $G$ and $H$}, denote $G\oplus H$, as the graph with $V(G\oplus H) = V(G) \cup V(H)$ and $E(G\oplus H) = E(G)\cup E(H)$.
\end{defn}

Now, let us assume that there is a path from $(x_1,y_1)$ to $(x_2,y_2)$ in $G\kgp H$.  If we ignore the $y$'s, this gives a path from $x_1$ to $x_2$ in $G$, and if we ignore the $x$'s, this gives a path from $y_1$ to $y_2$ in $H$.  Therefore if $(x_1,y_1)$ and $(x_2,y_2)$ are in the same component of $G\kgp H$, then $x_1$ and $x_2$ must be in the same component in $G$, and $y_1$ and $y_2$ must be in the same component in $H$.  This gives the following:

\begin{prop}\label{prop:components}
  Let $G$ and $H$ be graphs, and let 
 \begin{equation*}
   G = \bigoplus_{i\in I} G_i,\quad\quad 
   H = \bigoplus_{j\in J} H_j
 \end{equation*}
  be the decomposition of each graph into its connected components.   Then we have
  \begin{equation*}
    G\kgp H = \bigoplus_{i\in I, j\in J} (G_i\kgp H_j),
  \end{equation*}
  where the individual terms in the union are themselves disjoint. More specifically, if there is a path from $(x_1,y_1)$ to $(x_2,y_2)$ in $G\kgp H$, then there must be a path from $x_1$ to $x_2$ in $G$ and one from $y_1$ to $y_2$ in $H$.
\end{prop}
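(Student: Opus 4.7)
The plan is to verify the equality $G\kgp H = \bigoplus_{i\in I,j\in J}(G_i\kgp H_j)$ directly on vertex sets and edge sets, and then deduce the ``more specifically'' reachability statement from the behavior of paths under the two projections.

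First I would observe that since the $G_i$ partition $V(G)$ and the $H_j$ partition $V(H)$, the products $V(G_i)\times V(H_j)$ partition $V(G)\times V(H)=V(G\kgp H)$. In particular the vertex sets of the subgraphs $G_i\kgp H_j$ are pairwise disjoint, so the disjoint union on the right-hand side is well-defined and its vertex set already agrees with that of $G\kgp H$. For the edges, Definition~\ref{def:kronecker} says $(v_1,w_1)\to(v_2,w_2)$ is an edge of $G\kgp H$ iff $v_1\to v_2$ in $G$ and $w_1\to w_2$ in $H$. Since no edge of $G$ can leave a connected component, $v_1\to v_2$ lies in a unique $G_i$; similarly $w_1\to w_2$ lies in a unique $H_j$. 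Hence each edge of $G\kgp H$ belongs to exactly one $G_i\kgp H_j$, and conversely every edge of each $G_i\kgp H_j$ is an edge of $G\kgp H$. This gives equality as directed graphs.

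For the ``more specifically'' claim I would introduce the projections $\pi_G\colon V(G)\times V(H)\to V(G)$ and $\pi_H\colon V(G)\times V(H)\to V(H)$. By the definition of $\kgp$, if $(v_1,w_1)\to(v_2,w_2)$ is an edge of $G\kgp H$ then $\pi_G$ sends it to the edge $v_1\to v_2$ in $G$ and $\pi_H$ sends it to $w_1\to w_2$ in $H$. Iterating, any directed walk from $(x_1,y_1)$ to $(x_2,y_2)$ in $G\kgp H$ projects to a walk from $x_1$ to $x_2$ in $G$ and from $y_1$ to $y_2$ in $H$, proving the reachability condition. Together with the decomposition above, this forces any connected component of $G\kgp H$ to be contained in exactly one piece $G_i\kgp H_j$, which is what the proposition asserts.

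There isn't really a hard step here; the whole argument is an unpacking of definitions. The one point worth flagging, and what I would emphasize in the write-up, is that the proposition does \emph{not} claim that each $G_i\kgp H_j$ is itself connected --- only that the components of $G\kgp H$ refine the partition $\{V(G_i)\times V(H_j)\}_{i,j}$. In fact $G_i\kgp H_j$ can easily split into several components (products of cycles are a standard example), and that further refinement is exactly what must be analyzed case by case when one applies this proposition in conjunction with Theorem~\ref{thm:kronecker} to compute $\gr n$ in practice.
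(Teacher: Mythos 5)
Your argument is correct and matches the paper's own reasoning, which likewise establishes the decomposition by projecting paths in $G\kgp H$ onto paths in $G$ and in $H$ (the paper presents this in the paragraph immediately preceding the proposition). Your explicit verification that the vertex and edge sets partition is a detail the paper leaves implicit, but the approach is essentially identical.
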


  Basically this means that we can compute the Kronecker product ``component by component'' --- more specifically, when computing the Kronecker product we can just look at each component individually and not worry about the impact from other components.  This is good, because we can compartmentalize.  However, one caveat, as we will see below:  it is possible for the product of two connected graphs to not be connected, so that some of the $G_i\kgp H_j$ terms in the expansion above might not themselves be a single components, but can be multiple components.  However, we do see that the number of connected components of $G\kgp H$ is bounded below by the product of the numbers of connected components of $G$ and $H$, respectively.  Ok, so what do these component-by-component products actually look like?

As we learned from Theorems~\ref{thm:units} and~\ref{thm:nilpotent}, every component of $\gr{p^k}$ is either a flower cycle, or a looped tree (recall Definition~\ref{def:flower-cycle}).  This, with Proposition~\ref{prop:components}, tells us that the natural question is what happens when we take the Kronecker product of two graphs which are each of one of these two types.  We categorize all of the cases we might expect to see later in the following Proposition.

\begin{prop}\label{prop:kron2}
We have the following results:

\be

\ii (Two looped trees) If $T_1$ and $T_2$ are looped trees, then so is $T_1\kgp T_2$ --- specifically, this implies that $T_1\kgp T_2$ is connected, has a single root vertex, that root vertex loops to itself, and all vertices flow toward that root.  

\ii (Degrees)  Assume that $T_1$ and $T_2$ are looped trees with the property that every vertex in $T_i$ either has in-degree $w_i$ or $0$.  Then every vertex in $T_1\kgp T_2$ has in-degree $w_1\times w_2$ or $0$.

\ii (Specific grounded trees) For any widths $w_1,w_2$ and depth $\theta$, we have
\begin{equation*}
  T_{w_1}^\theta\kgp T_{w_2}^\theta \cong T_{w_1\cdot w_2}^\theta.
\end{equation*}
(If we take two regular grounded trees {\bf of the same depth}, then we get a regular grounded tree of that depth, just wider.)

\ii (Another description of flower cycles)  
\begin{equation*}
  C_\alpha\kgp T \cong C_\alpha(T).
\end{equation*}

\ii (One looped tree and one flower cycle)  \begin{equation*}
  C_\alpha(T_1)\kgp T_2 \cong C_\alpha(T_1\kgp T_2).
\end{equation*}

\ii (Two flower cycles)  Let $G = C_\alpha(T_1)$ and $H = C_\beta(T_2)$.  Let $\gamma = \gcd(\alpha,\beta)$ and $\lambda = \lcm(\alpha, \beta)$.  Then $G\kgp H$ is $\gamma$ disjoint copies of the graph $G_\lambda(T_1\kgp T_2)$.
\ee
\end{prop}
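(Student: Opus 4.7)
The plan is to dispatch the six claims largely in order, exploiting associativity of $\kgp$ (noted after Definition~\ref{def:kronecker}) together with the basic fact that the in-neighborhood of $(v_1,v_2)$ in $G\kgp H$ is the Cartesian product of the in-neighborhoods of $v_1$ and $v_2$, and likewise for out-neighborhoods. This observation gives (2) at once: if every vertex of $T_i$ has in-degree $w_i$ or $0$, every vertex of $T_1\kgp T_2$ has in-degree $w_1w_2$ or $0$. For (1), every vertex of a looped tree has out-degree $1$, so the product is again functional. Let $r_i$ be the root of $T_i$; then $(r_1,r_2)$ has a self-loop (from the loops at $r_1$ and $r_2$), and given any $(v_1,v_2)$, the paths $v_i\to\cdots\to r_i$ of length $k_i$ in $T_i$ combine to a path $(v_1,v_2)\to\cdots\to(r_1,r_2)$ of length $\max(k_1,k_2)$, using the root self-loops to ``idle'' the coordinate that reaches its root first. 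This establishes connectivity. Any cycle in the product projects to cycles in each factor, and the only cycle in a looped tree is the self-loop at its root, so the only cycle in the product is at $(r_1,r_2)$. A functional digraph with a unique cycle, that cycle being a self-loop, is precisely a looped tree.

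Claim (3) is the most intricate step. By (1) and (2), $T_{w_1}^\theta\kgp T_{w_2}^\theta$ is a looped tree in which every vertex has in-degree $w_1w_2$ or $0$, with $(w_1w_2)^\theta$ vertices. To identify it with $T_{w_1w_2}^\theta$, I would define the layer of $(v_1,v_2)$ to be the maximum of the layers of $v_1$ and $v_2$ in their respective trees, and check by induction on the layer that: the root is the only vertex at layer $0$; a vertex $(v_1,v_2)$ at layer $k<\theta$ has exactly $w_1w_2$ in-neighbors, all at layer $k+1$; and vertices at layer $\theta$ are exactly the leaves. The case analysis here splits on whether both coordinates sit at layer $k$ or only one does, and must carefully use that the root of $T_{w_i}^\theta$ has in-degree $w_i$ (its own loop plus the $w_i-1$ layer-$1$ vertices). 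A bookkeeping count via inclusion--exclusion then matches the layer cardinalities with those of $T_{w_1w_2}^\theta$. I expect this case analysis to be the main obstacle, since the root, interior layers, and leaves each need separate treatment.

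Claims (4)--(6) follow by a common strategy: unpack the cycle case from the definition, then bootstrap everything else by associativity. For (4), the vertices of $C_\alpha\kgp T$ are pairs $(i,v)$ with $i\in\Z/\alpha\Z$, and $(i,v)\to(i+1,f(v))$ where $f$ is the edge map of $T$; the loops at the roots of the $\alpha$ copies become the cycle $(i,r)\to(i+1,r)$, and for each non-root $v\in V(T)$ of depth $k$ in $T$, the vertex $(i-k\bmod\alpha,v)$ lies in the subtree hanging off the cycle-vertex $(i,r)$, producing one copy of $T$ (minus its root loop) at each cycle-vertex --- exactly the definition of $C_\alpha(T)$. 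Claim (5) is then immediate by associativity: $C_\alpha(T_1)\kgp T_2 \cong (C_\alpha\kgp T_1)\kgp T_2 \cong C_\alpha\kgp(T_1\kgp T_2) \cong C_\alpha(T_1\kgp T_2)$, using that $T_1\kgp T_2$ is itself a looped tree by (1). For (6), analyze $C_\alpha\kgp C_\beta$ directly: its edge map sends $(i,j)\mapsto(i+1,j+1)$, so the orbit through $(i,j)$ has length $\lcm(\alpha,\beta)=\lambda$, yielding $\alpha\beta/\lambda=\gcd(\alpha,\beta)=\gamma$ disjoint copies of $C_\lambda$. Combining with (4), associativity, and the easy distributivity $\bigl(\bigoplus_i G_i\bigr)\kgp H \cong \bigoplus_i(G_i\kgp H)$ from the definition, one computes $C_\alpha(T_1)\kgp C_\beta(T_2)\cong (C_\alpha\kgp C_\beta)\kgp (T_1\kgp T_2)\cong\gamma$ disjoint copies of $C_\lambda\kgp(T_1\kgp T_2)\cong\gamma$ disjoint copies of $C_\lambda(T_1\kgp T_2)$, which is (6).
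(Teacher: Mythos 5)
Your proposal is correct and follows essentially the same route as the paper: the in-neighborhood product observation for (1)--(3), a level-shift identification for (4), and associativity plus the orbit analysis of $(i,j)\mapsto(i+1,j+1)$ for (5)--(6). The only substantive difference is that you explicitly rule out extra cycles in the product of two looped trees (via projection of cycles), a point the paper's proof of claim (1) leaves implicit; this is a small but welcome tightening rather than a different approach.
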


Note the last case where two connected graphs have a disconnected product!
\begin{proof}

\be

\ii  Consider the vector $(0,0)$ in $V(T_1)\times V(T_2)$, where each $0$ corresponds to the root in that tree.  Then note that $(0,0)\to(0,0)$ by definition, and we have this loop.  More generally, if we take any $(x,y)\in V(T_1)\times V(T_2)$, then there is a finite path from $x$ to $0$ in $T_1$ and a finite path from $y$ to $0$ in $T_2$, so there is a finite path (whose length is the maximum of the two aforementioned paths) from $(x,y)$ to $(0,0)$.

\ii In fact we prove something a bit more general:  if $x\in V(G)$ has an in-degree of $k$ and $y\in V(H)$ has an in-degree of $l$, then $(x,y)\in V(G\kgp H)$ has an in-degree of $k\times l$.  To see this, note that if $z_i \to x$ for $i=1,\dots, k$ and $w_j\to y$ for $j=1,\dots, l$, then $(z_i, w_j)\to (x,y)$ for all $i,j$.  The claim follows directly.

\ii Let us consider the case where we have two trees $G_1 = T_{w_1}^\theta$ and $G_2 = T_{w_2}^\theta$.  See Lemma~\ref{lem:param} below for a parameterization of each of these trees.  Then, the root of $G_1\kgp G_2$ is $(0,0)$ where $0$ is the root of each $G_1$.  Note that it has a loop to itself.  Let us define the first layer of $G_1\kgp G_2$ to be any pair $(k_1,k_2)$ with $0\le k_1\le w_1-1$ $0\le k_2\le w_2-1$ but not both zero.  Note that all of these map to $(0,0)$ in one step, and there are exactly $w_1w_2-1$ of these.  Now we define the $\ell$th layer of $G_1\kgp G_2$:   given pair $(v_1,v_2)$ where $v_1$ is in level $\ell_1$ of $G_1$ and $v_2$ is in level $\ell_2$ of $G_2$, define $\ell = \max(\ell_1, \ell_2)$.  Note that any such vertex will map down to layer $\ell-1$.  Moreover, if $\ell<\theta$, then each of these vertices has exactly $w_1w_2$ preimages, because we can append $w_1$ numbers to the end of $v_1$ and $w_2$ numbers to the end of $v_2$.  If $\ell=\theta$, then these elements are all leaves, and we are done.

\ii  This is actually a bit trickier than it looks, since these two descriptions are isomorphic but not equal.  Recall the definition of $C_\alpha(T)$:  we define the vertices of this graph to be $(v,\beta)$ where $v\in V(T)$ and $1\le \beta\le \alpha$.  The edges in $ C_\alpha(T)$ are defined as follows:
\begin{equation*}
  (v,\beta)\to (w,\gamma) \iff (\beta=\gamma \land v\to w) \lor (v=w=r \land \gamma = \beta+1).
\end{equation*}
However, in $C_\alpha\kgp T$ (actually $T\kgp C_\alpha$ but who's counting) we have 
\begin{equation*}
  (v',\beta') \to (w',\gamma') \iff v' \to w' \land \gamma' = \beta' +1.
\end{equation*}
Now let us define a map $\psi\colon C_\alpha(T) \to C_\alpha \kpg T$ as $\psi(v, \beta) = (v, \beta-\mathcal L(v))$, where $\mathcal L (v)$ is the level of the vertex $v$ (in this case, the number of edges it takes to get from $v$ to $r$).  Now, let us assume that $\phi(v,\beta)\to \psi(w,\gamma)$ in $C_\alpha \kpg T$.  This is true iff $(v,\beta-\mathcal L(v))\to (w,\gamma-\mathcal L(w))$ in $C_\alpha \kpg T$, which if true iff $v\to w$ in $T$ and $\gamma-\mathcal L(w) = \beta-\mathcal L(v) + 1$.  Now we claim that this is equivalent to $(v,\beta)\to(w,\gamma)$ in $C_\alpha(T)$.  Breaking up by cases:  if $v\neq  w$, then $\mathcal L(w) = \mathcal L(v) = 1$, which would imply $\gamma = \beta$, and if $v=r$, then $w=r$, and $\mathcal L(v) = \mathcal L(w) = 0$, which would imply $\gamma =\beta+1$.

\ii This follows from the previous result twice and associativity, since
\begin{equation*}
  C_\alpha(T_1) \kgp T_2 = (C_\alpha \kgp T_1)\kgp T_2 = C_\alpha \kgp (T_1\kgp T_2) = C_\alpha(T_1\kgp T_2).
\end{equation*}

\ii  Let us first note that if we consider two bare cycles, we get a similar formula:  Let $G_1 = C_\alpha$ and $G_2 = C_\beta$.  Note that we can think of these cycles as the map $x\mapsto x+1$ on $\zmod \alpha$ and $\zmod\beta$ respectively, and here $G_1\kgp G_2$ will be the graph of the map $(x_1,x_2) \mapsto (x_1+1,x_2+1)$ on $\zmod\alpha\times\zmod \beta$.  It is easy enough to see that the element $(1,1)$ has order $\lambda$, and so every point is on a periodic cycle of length $\lambda$.  Since there are $\alpha\cdot\beta$ total vertices, and $\alpha\cdot\beta/\lambda = \gamma$, there are exactly $\gamma$ distinct periodic cycles.

To get the full case: we have
\begin{equation*}
  C_\alpha(T_1)\kgp  C_\beta(T_2) = (C_\alpha\kgp T_1)\kgp (C_\beta \kgp T_2) = (C_\alpha\kgp C_\beta)\kgp(T_1\kgp T_2),
\end{equation*}
and we have computed above that $C_\alpha\kgp C_\beta$ is $\gamma$ copies of $C_\lambda$.

\ee

\end{proof}

\begin{lem}\label{lem:param}
We can parameterize $T_w^\theta$ as follows:  let $V_0 = \{0\}$, $V_1$ be the set $\{1,\dots, w-1\}$ and let 
\begin{equation*}
  V_\ell = \{(v_1,v_2,\dots, v_\ell): 1 \le v_1 \le w-1, \forall k > 1, 1 \le v_k \le w\}.
\end{equation*}
(These correspond to the ``layers'' in the graph, so any vertex in $V_\ell$ is ``in layer $\ell$''.)  Let $V = \cup_{i=0}^\theta V_\ell$.  Now define edges as follows:  for all $k\in V_1$, add an edge $k\to 0$, and for all $v\in V_\ell$ with $2\le \ell \le \theta$, add the edge
\begin{equation*}
  (v_1,v_2,\dots, v_{\ell-1}, v_\ell) \to (v_1,v_2,\dots, v_{\ell-1}),
\end{equation*}
i.e. throw out the last component.  Then this graph is isomorphic to $T_w^\theta$.
\end{lem}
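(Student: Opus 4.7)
The plan is to establish the isomorphism by induction on the layer index $\ell$, constructing an explicit label-preserving bijection. The key observation, coming directly from the recursive definition of $T_w^\theta$, is that every non-root vertex in layers $0,1,\dots,\theta-1$ has exactly $w$ preimages in the next layer, while the root has only $w-1$ preimages in layer $1$ (the ``missing'' incoming edge being accounted for by the self-loop at the root).

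First I would fix, once and for all, an arbitrary ordering of the preimages of each non-leaf vertex of $T_w^\theta$: at the root, label the $w-1$ preimages by $\{1,\dots,w-1\}$, and at any non-root non-leaf vertex, label the $w$ preimages by $\{1,\dots,w\}$. Then define a map $\phi\colon V\to V(T_w^\theta)$ recursively by sending $0\in V_0$ to the root, sending $k\in V_1$ to the preimage of the root labeled $k$, and for $(v_1,\dots,v_\ell)\in V_\ell$ with $\ell\ge 2$, sending this tuple to the preimage labeled $v_\ell$ of $\phi(v_1,\dots,v_{\ell-1})$.

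A straightforward induction on $\ell$ shows that $\phi$ restricted to $V_\ell$ is a bijection onto layer $\ell$ of $T_w^\theta$: the base case is trivial, and the inductive step is exactly the statement that the labeling of preimages of any layer-$(\ell-1)$ vertex is in bijection with the allowed range of $v_\ell$. A quick count confirms the sizes: $|V_\ell|=(w-1)w^{\ell-1}$ for $\ell\ge 1$ and $|V_0|=1$, summing to $w^\theta$, which matches the total number of vertices in $T_w^\theta$. Edge preservation is immediate from the construction: the parameterized edge $(v_1,\dots,v_\ell)\to(v_1,\dots,v_{\ell-1})$ is sent to the edge from the preimage labeled $v_\ell$ of $\phi(v_1,\dots,v_{\ell-1})$ down to $\phi(v_1,\dots,v_{\ell-1})$ itself, and conversely every edge of $T_w^\theta$ arises this way from a unique such tuple.

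There is really no serious obstacle here; the content of the lemma is that we can describe the vertices of $T_w^\theta$ by ``addresses'' encoding the sequence of branch choices along the root-to-vertex path. The only small care needed is the mild asymmetry at layer $1$, where $v_1$ ranges over $\{1,\dots,w-1\}$ rather than $\{1,\dots,w\}$, reflecting the self-loop at the root. If one wishes the isomorphism to respect this loop as well, one simply adjoins the edge $0\to 0$ to the parameterized graph, and the argument above goes through unchanged.
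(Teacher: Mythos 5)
Your proof is correct. The paper in fact states Lemma~\ref{lem:param} without any proof at all, and your address-based bijection (labelling the $w-1$ preimages of the root and the $w$ preimages of every other non-leaf vertex, then inducting on the layer) is exactly the routine argument the author leaves implicit; your observation that the lemma's edge list omits the self-loop $0\to 0$ at the root, which must be adjoined for a literal isomorphism with the grounded tree $T_w^\theta$, is a fair and worthwhile catch.
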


\subsection{The same unit graph can show up in many places}

A natural question is how similar the graphs can be for different $n$. However, note that if we have two primes $p,q$ with $p\neq q$, then $\phi(p)\neq\phi(q)$, so they do not have isomorphic sets of units.  But it is possible that we can have two primes $p\neq q$ with $\phi(p^k) = \phi(q)$, and thus $\units{p^k} \cong \units{q}$.  For example, we have that
\begin{equation*}
  \phi(27) = \phi(19) = 18,
\end{equation*}
so $\units{27}\cong\units{19}\cong\add{18}$. We now work both cases out.  Since $18 = 2\cdot 9$, we have $\theta=1$ (so the flowers on the cycle are just one node sticking out) and the divisors are $1,3,9$.  Since $\phi(3) = \ordt 3 = 2$, there is one orbit of period $2$, and since $\phi(9) = \ordt 9 = 6$, there is one orbit of period $6$.  This determines the graph of units completely.  For $n=19$, of course, we have that $\nilp{19}$ is just $0$ with a loop to itself, but $\nilp{27}$ is more complicated.  The set of points that map to zero modulo $27 = 3^3$ are the two points $9=3^2$ and $18=2\cdot 3^2$.  If we recall the graph $\gr 3$, we see that $1$ has a preimage of $2$, and $2$ has no preimage.  Therefore $18$ is a leaf in $\nilp{27}$, whereas $9$ is a tree of the form $\tree 3{1}2$, which will be a regular tree of width $6$ and depth $1$ (there are three trees of type $\tree 3 1 1$ and three of type $\tree 3 2 1$, which are themselves single nodes by definition, so are leaves in $\nilp{27}$).  See Figure~\ref{fig:18}.

\begin{figure}[th]
\begin{center}
\end{center}
\includegraphics[width=0.95\textwidth]{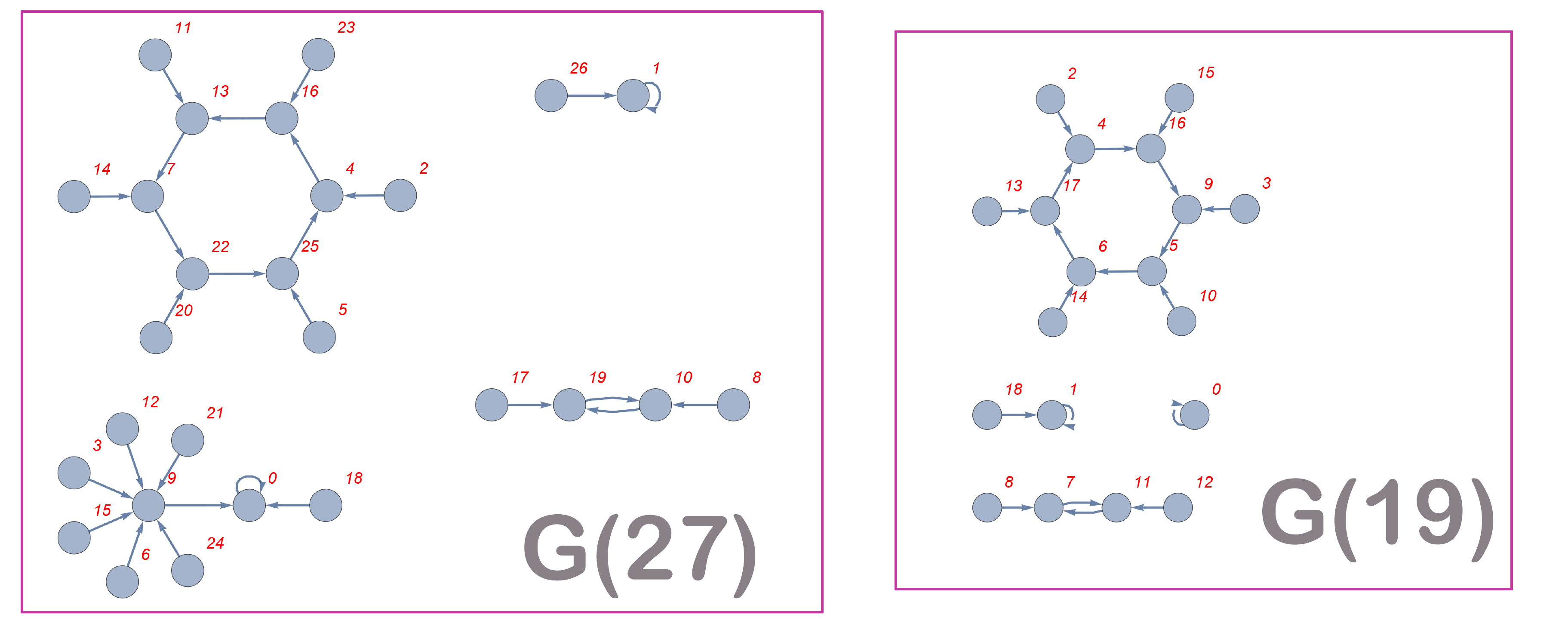}
\caption{The graphs $\gr{27}$ and $\gr{19}$.  Note that they are isomorphic on the units (the graphs are the same, but of course the values at the vertices are different), with only a different nilpotent tree.}
\label{fig:18}
\end{figure}

\subsection{Example where we really go to town on the Kronecker products}

As we proved in Proposition~\ref{prop:kron2}, all of the components that show up in $\gr n$ are themselves Kronecker products of graphs that show up in the factors of other graphs.  So, let's say, for example, we want to find an $n$ that contains a copy of the graph
\begin{equation*}
  T_2^1 \kgp T_2^2 \kgp T_2^3 \kgp T_2^4.
\end{equation*}
One way to obtain this is to find primes that contain such graphs, and then multiply these together.  We know that a $\gr p$ for $p$ prime will contain a $T_2^\theta$ iff $p-1$ is divisible by exactly $\theta$ powers of $2$.   For $\theta=1$, we can pick $p_1=3$, and for $\theta=2$ we can pick $p_2=5$.  For $\theta=3$ we cannot pick $2^3+1$, since it's not prime, and we don't want to pick any even multiple of $8$, as we'll get too many powers of $2$.  So we can pick $p_3=41$, and finally we can pick $p_4= 17$.  Of course, $p_1$, $p_2$, and $p_4$ are relatively small since they are Fermat primes, but we need to stretch a bit for $p_3$.

Now we know that the basin of attraction of $1$ in $\gr{p_i}$ is exactly $T_2^i$, for $i=1,2,3,4$.  We have $p_1p_2p_3p_4 = 10455$, and therefore the basin of attraction of $1$ in $\gr{10455}$ is exactly $T_2^1 \kgp T_2^2 \kgp T_2^3 \kgp T_2^4$, shown in Figure~\ref{fig:1024attractor}.  (In fact, to generate this picture we actually just directly computed the subgraph of $\gr{10455}$ containing $1$ directly.)  It's a pretty wild graph:  it has a radius of $4$ due to the $T_2^4$ term, but the smaller terms give it some interesting heterogeneity (e.g. each intermediate node has leaves hanging off of it, etc.).

\begin{figure}[th]
\begin{center}
\includegraphics[width=0.65\textwidth]{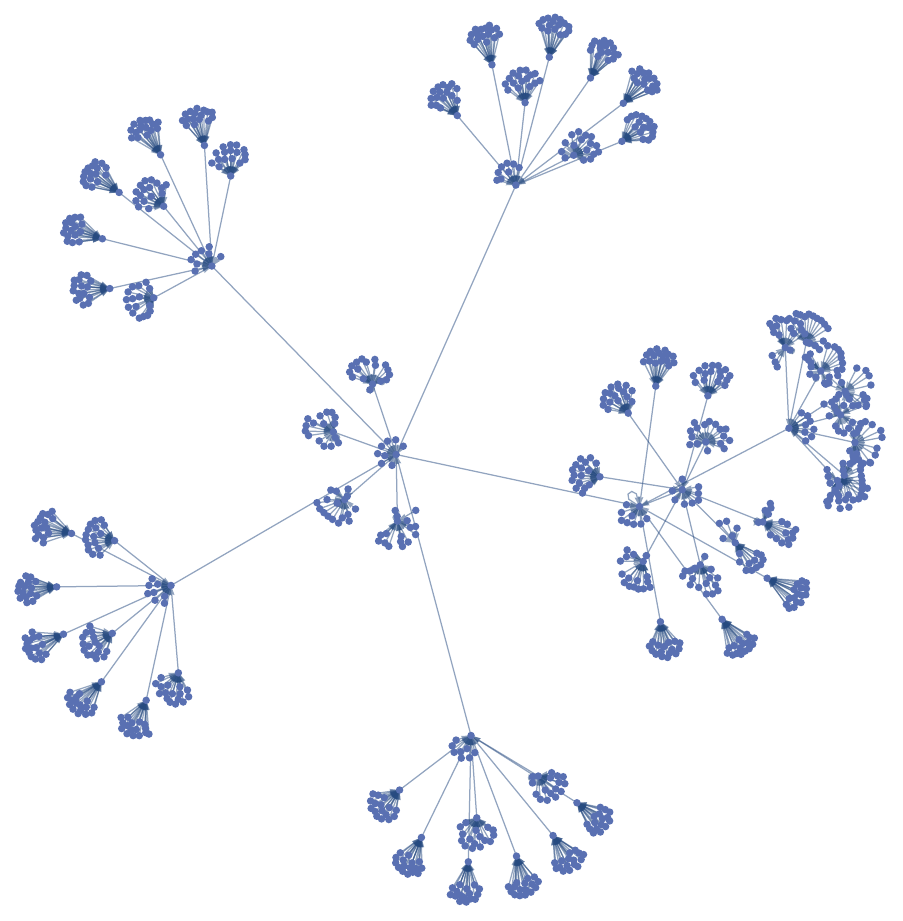}
\caption{The graphs $T_2^1 \kgp T_2^2 \kgp T_2^3 \kgp T_2^4$, which appears as the basin of attraction of $1$ in the graph $\gr{10455}$.  At this point, it is perhaps clear why we gave ``flower cycles'' their names:  the }
\label{fig:1024attractor}
\end{center}
\end{figure}

In fact, we can completely understand the graph of $\gr{10455}$ using the tools above.  Note that since $p_1,p_2,p_4$ are all Fermat primes, their graphs consist of a tree containing $p_i-1$ nodes, plus a $0$ with a loop, so is the union $T_2^i \cup T_1^1$.  For $p_3=41$, we note that $p-1=40 = 8*5$, and $\ordt 5 =\phi(5)= 4$, so $\gr{41}$ has a single loop of period $4$ in the form of a $C_4(T_2^3)$, plus a $T_2^3$ going to $1$, plus the single loop at $0$.  There are 24 possible products in the expansion
\begin{equation*}
  (T_2^1 \cup T_1^1)\kgp (T_2^2 \cup T_1^1)\kgp(C_4(T_2^3) \cup T_2^3 \cup T_1^1) \kgp (T_2^4 \cup T_1^1)
\end{equation*}
and notice that there is only one loop that can be chosen in any of these products, so each product remains connected and there are precisely 24 components.  Moreover, exactly eight of these will have a loop of period $4$, and sixteen will not, for example for loops we can get a $C_4(T_2^3)$ by choosing $T_1^1\kgp T_1^1 \kgp C_4(T_2^3) \kgp T_1^1$, but we can also get $C_4(T_2^1\kgp T_2^3)$ by choosing $T_2^1\kgp T_1^1 \kgp C_4(T_2^3) \kgp T_1^1$, etc.

In general, we can extrapolate directly from this example that for any flower cycle of the form $C_\alpha(T)$, where $T$ is a tree that can be formed by taking Kronecker products of any collection of $T_2^\theta$'s, then we can find a (square-free) $n$ that contains $C_\alpha(T)$.

\subsection{Triggering divisors}\label{sec:triggering}

\begin{defn}
  Given a period $k$, we say that $d$ is a {\bf triggering divisor} for $k$ if 
  \bi
  
  \ii $\ordt d = k$;
  
  \ii $\ordt e\neq k$ for any $e$ that is a proper divisor of $d$.
  \ei
\end{defn}

\begin{remark}
It follows directly from the definition that   $\ordt d = k$ iff $d$ is a multiple of a triggering divisor of $k$.

The name ``triggering divisor'' comes from the fact that the presence of such a divisor of $m$ will trigger a periodic orbit of period $k$.  (It might be that other divisors of $m$ also give periodic orbits of period $k$, but this minimal one is the one that triggers the condition...)
\end{remark}

\begin{cor}
  The graph $\gr{p^k}$ has a flower orbit of type $C_k(T_2^\theta)$ iff $m = \phi(p^k)$ is a multiple of $2^\theta$ times a triggering divisor of $k$.
\end{cor}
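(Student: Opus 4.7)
The plan is to invoke Theorem~\ref{thm:units} directly and then translate the resulting condition into the language of triggering divisors via the remark that follows their definition.

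First I would set up notation matching the theorem: write $m = \phi(p^k) = 2^\theta\mu$ with $\mu$ odd. By Theorem~\ref{thm:units}, the components of $\units{p^k}$ are precisely the graphs $C_{\ordt d}(T_2^\theta)$ as $d$ ranges over the divisors of $\mu$ (each appearing $\phi(d)/\ordt d$ times). In particular, the tree depth attached to every periodic orbit of $\gr{p^k}$ is forced to equal the $2$-adic valuation of $\phi(p^k)$. Consequently, the existence of a flower component of type $C_k(T_2^\theta)$ reduces to the purely number-theoretic question: does there exist a divisor $d$ of $\mu$ with $\ordt d = k$?

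Next I would apply the remark following the definition of triggering divisor: $\ordt d = k$ if and only if $d$ is a multiple of some triggering divisor of $k$. Combining this with $d \mid \mu$, any such $d$ yields a triggering divisor $d_0$ of $k$ with $d_0 \mid d \mid \mu$. Conversely, if some triggering divisor $d_0$ of $k$ divides $\mu$, then taking $d = d_0$ itself works, since $\ordt{d_0} = k$ by definition. Hence such a $d$ exists iff $\mu$ is divisible by some triggering divisor of $k$, which is precisely the statement that $m = 2^\theta\mu$ equals $2^\theta$ times an odd multiple of a triggering divisor of $k$.

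The main ``obstacle'' here is not mathematical but interpretive: one must read the phrase ``multiple of $2^\theta$ times a triggering divisor of $k$'' as an \emph{odd} multiple, because Theorem~\ref{thm:units} pins the tree depth to equal $v_2(\phi(p^k))$ exactly, not merely to bound it from below. Once this is noted, the corollary is an immediate composition of Theorem~\ref{thm:units} with the triggering-divisor remark, and no further computation is needed.
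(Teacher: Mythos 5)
Your proof is correct and follows essentially the same route as the paper, whose own proof is just the one-line observation that the corollary follows from Theorem~\ref{thm:units} and the definition of triggering divisor; you have simply spelled out the two directions (using only the sound direction of the remark, namely that $\ordt d = k$ forces $d$ to be a multiple of a triggering divisor, and taking $d = d_0$ itself for the converse). Your observation that ``multiple of $2^\theta$ times a triggering divisor'' must be read as an \emph{odd} multiple of the triggering divisor (so that the tree depth is pinned to exactly $\theta$) is a genuine and worthwhile clarification of a point the paper leaves implicit.
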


\begin{proof}
  This follows directly from Theorem~\ref{thm:units} and the definition of triggering divisor.
\end{proof}

We present the triggering divisors for all periods up to $50$ in Table~\ref{tab:trig}, and we have also put the first $n$ for which a period appears.  Note that the triggering divisors are not ``unique'' in some cases for certain periods --- we can obtain multiple numbers, neither of which divides the other.  Also note, interestingly, that these numbers might not even be relatively prime.  In the table, when there are multiple triggering divisors we have bolded the one that gives rise to the first prime to have that period; for example, for period 18, the smallest prime that is $1\pmod{19}$ is actually 191, whereas the smallest prime that is $1\pmod{27}$ is 109. 

Also, one might ask whether a composite number gives rise to a particular periodic orbit before any prime does.  In theory it is possible that one could observe a (composite) period first in a composite number.  For example, to obtain a period-10 orbit, we can find a prime that gets it from a triggering divisor (in this case, 47) or we could find a number with a period-2 and a period-5 and multiply them.  So, for example, $n=7*311 = 2177$ also has a period-10 orbit, but $2177 > 47$.  In practice we never found an example where a period first shows up in a composite number but there are a lot of integers that we didn't check.  The author is agnostic on the claim as to whether a particular period length can ever show up first at a composite $n$.

There is a lot of heterogeneity in how large the triggering divisors are.  One thing that follows directly from the definitions is that
\begin{prop}\label{prop:Mersenne}
  If $p$ is a Mersenne prime ($p$ is prime and $2^p-1$ is also prime) then period-$p$ has the single triggering divisor $2^p-1$.
\end{prop}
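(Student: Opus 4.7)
The plan is to verify the two defining conditions of a triggering divisor for $d = q := 2^p - 1$ and then show uniqueness, both of which reduce to the primality of $p$ and of $q$.

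First I would establish that $\ord_q(2) = p$. By construction $q \mid 2^p - 1$, i.e. $2^p \equiv 1 \pmod q$, so $\ord_q(2)$ divides $p$. Since $p$ is prime, $\ord_q(2) \in \{1, p\}$, and $\ord_q(2) = 1$ would force $q \mid 1$, which is absurd because $q = 2^p - 1 > 1$. Hence $\ord_q(2) = p$. The only proper divisor of $q$ is $1$ (since $q$ is prime by assumption), and $\ord_1(2) = 1 \neq p$, so $q$ satisfies both clauses of the definition of a triggering divisor for $p$.

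For uniqueness, suppose $d$ is any triggering divisor for period $p$. Then $\ord_d(2) = p$, which means $2^p \equiv 1 \pmod d$, equivalently $d \mid 2^p - 1 = q$. Because $q$ is prime, $d \in \{1, q\}$, and again $d = 1$ is ruled out by $\ord_1(2) = 1$. Hence $d = q$, proving that $q$ is the unique triggering divisor for period $p$.

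There is no serious obstacle here: the entire argument is a two-line deduction from the fact that both $p$ and $2^p - 1$ are prime, combined with the definition of multiplicative order. The only thing to be careful about is the trivial case $d = 1$, which is excluded in both directions of the argument by observing that $\ord_1(2) = 1 \neq p$.
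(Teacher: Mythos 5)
Your proof is correct, and it fills in exactly the deduction the paper leaves implicit (the paper states the proposition ``follows directly from the definitions'' and gives no argument, though the preceding paragraph describes the same procedure of listing divisors of $2^r-1$ and checking orders). Both clauses of the definition and the uniqueness claim are verified cleanly, including the edge case $d=1$.
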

One can see this markedly at $2, 3, 5, 7, 13, 17, 19, 31$ in Table~\ref{tab:trig}, whereas for primes that are not Mersenne primes (e.g. $p=11, 37, 39$ there is a ``pretty small'' triggering divisor.)

The method to obtain the values in Table~\ref{tab:trig} was as follows.  For any period $r$, we list all divisors of the number $2^r-1$.  For any $d$ that divides $2^r-1$, we compute the multiplicative order of $2$ modulo $d$ and retain those where $\ordt d = r$.  (Of course, since $d$ divides $2^r-1$, then $\ordt d$ must divide $r$, but it could be strictly smaller.)  From this retained list of divisors, we remove those divisors that are multiples of others in the list.
For example, let us consider period $6$.  We have $2^6-1 = 63$, the divisors of which are $1,3,7,9,21,63$.  We have $\ordt 1=1,\ordt3=2$ and $\ordt7=3$ and remove those.  Of the remaining three, we can compute that they satisfy $\ordt d = 6$.  But then notice that $63$ is a multiple of $9$ (and also $21$ for that matter) so we throw out $63$, leaving $9$ and $21$.  And also note that these divisors are independent in the sense that they give different sets of primes.  For example, if we consider $p=19$, then $19-1 = 18 = 2*9$, which is not divisible by $21$, and conversely $p=43$ has $p-1 = 2*21$ which is not divisible by $9$.  Note that $\theta=1$ in each of these cases, so we see that $\gr{19}$ has a single period-6 orbit of type $C_6(T_2^1)$ (since $\phi(9) = 6$) but $\gr{43}$ has two disjoint such orbits (since $\phi(21) = 12$).  See Figure~\ref{fig:1943}.

\begin{table}
\begin{center}
\begin{tabular}{|c|c|c|c|c|c|}\hline 
Period & Trig div &First&Period & Trig div & First \\\hline
1 & 1 &  1& 26 & {\bf 2731},24573 &60083\\\hline 
 2 & 3 & 7& 27 & 262657 &2626571\\\hline 
 3 & 7 & 29 &28 & {\bf 29},113,215,635 & 59 \\\hline 
 4 & 5 & 11& 29 & {\bf 233},1103,2089 & 467 \\\hline 
 5 & 31 & 311& 30 & 77,{\bf 99},279,331,453,651,1661 &199\\\hline 
 6 & {\bf 9}, 21 & 19& 31 & 2147483647 & ?? \\\hline 
 7 & 127 & 509&32 & 65537 & 917519\\\hline 
 8 & 17 & 103 & 33 & {\bf 161},623,599479 & 967 \\\hline 
 9 & 73 & 293&34 & {\bf 43691},393213 &87383\\\hline 
 10 & {\bf 11},93 & 23& 35 & {\bf 71},3937,122921 & 569\\\hline 
 11 & {\bf 23},89 & 47&36 & {\bf 37},95,109,135,247,351,365,949 & 149 \\\hline 
 12 & {\bf 13},35,45 & 53& 37 & {\bf 223},616318177 & 2677 \\\hline 
 13 & 8191&376787 & 38 & {\bf 174763},1572861 & 699053\\\hline 
 14 & {\bf 43},381 & 173&39 & {\bf 79},57337,121369 & 317 \\\hline 
 15 & {\bf 151},217 & 907&40 & {\bf 187},425,527,697,61681  & 1123\\\hline 
 16 & 257 & 1543&41 & {\bf 13367},164511353 & 106937\\\hline 
 17 & 131071 &1572853& 42 & {\bf 147},301,387,1011,1143,2667,5419,14491 & 883\\\hline 
 18 & 19,{\bf 27},219 & 109& 43 & {\bf 431},9719,2099863 & 863 \\\hline 
 19 & 524287 & 8388593&44 & {\bf 115},397,445,2113,3415 & 461\\\hline 
 20 & 25,{\bf 41},55,155 & 83&45 & {\bf 631},2263,11023,23311 & 6311 \\\hline 
 21 & {\bf 49},337,889 & 197& 46 & {\bf 141},535443,2796203 & 283\\\hline 
 22 & {\bf 69},267,683 & 139& 47 & {\bf 2351},4513,13264529 & 4703 \\\hline 
 23 & {\bf 47},178481 & 283 &48 & {\bf 97},673,1799,2313,3341,61937  & 389\\\hline 
 24 & {\bf 119},153,221,241 & 239&49 & 4432676798593 & ??\\\hline 
 25 & {\bf 601},1801 & 3607&50 & {\bf 251},1803,4051,5403,6611,19811 &503\\\hline 
\end{tabular}
\caption{List of triggering divisors for period up to 50}
\end{center}
\label{tab:trig}
\end{table}

\begin{figure}[th]
\begin{center}
\includegraphics[width=0.95\textwidth]{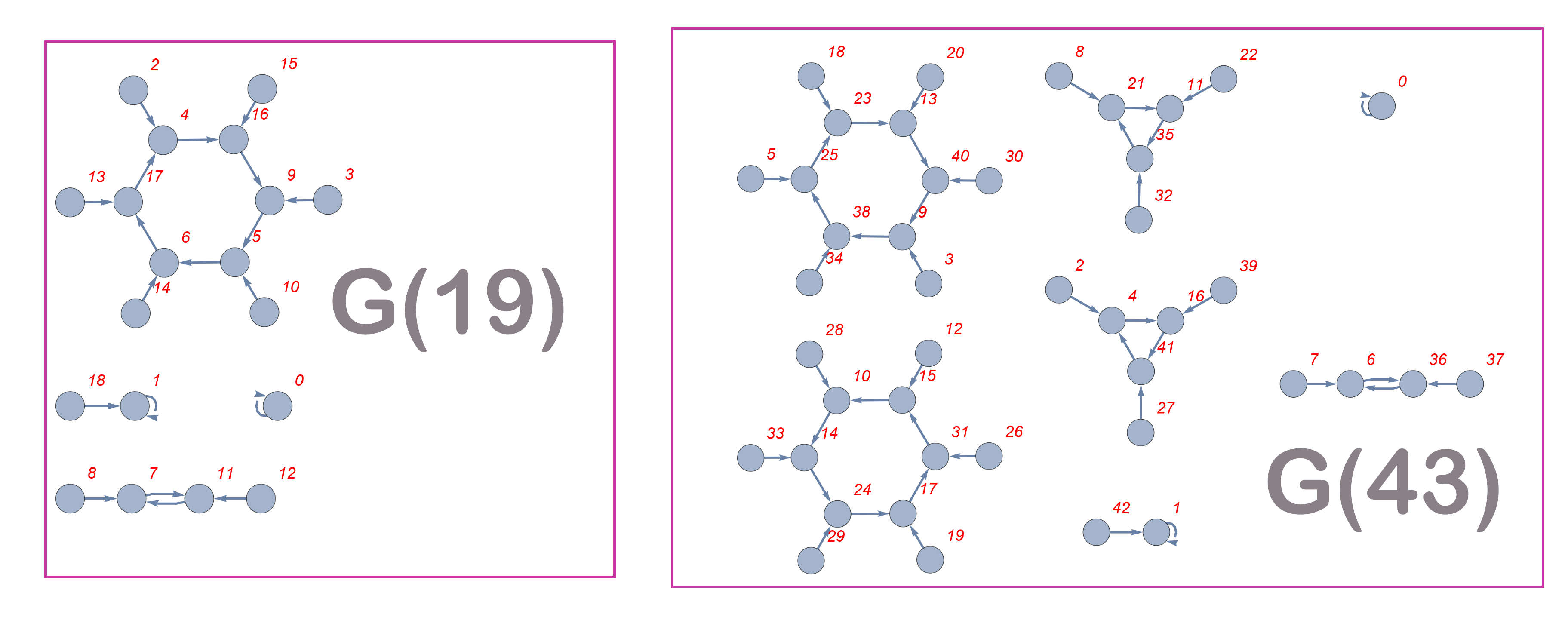}
\caption{The graphs $\gr{19}$ and $\gr{43}$.  Note that they both have period-6 orbits, and all periodic orbits have a ``spoke'' sticking out from the $T_2^1$.}
\label{fig:1943}
\end{center}
\end{figure}

\subsection{When cycles beget many more cycles}

Consider the two numbers considered in the previous section: the primes $19$ and $43$.  These numbers were chosen as primes that give period-6 orbits, but ``for different reasons'', because they derive from independent triggering divisors.  Since they both have periodic orbits of the same period, if we take the Kronecker product, we will obtain many orbits of that period, from Proposition~\ref{prop:kron2}, since $\alpha=\beta = \lambda = \gamma$ --- so that the product of two period-6 cycles is actually 6 distinct period-6 cycles.

Again, we first consider $\gr{19}$. Since $19-1 = 18 = 2*9$, we need to consider the divisors of $9$.  We already saw that we have an orbit of period $6$, and from the divisor $3$ we get and orbit of period $2$. Since $\theta=1$ we have $T_2^1$ attached to the graphs (just giving a ``spoke'').  Therefore $\gr{19}$ has four components, namely $$\gr{19}\cong C_6(T_2^1) \oplus C_2(T_2^1) \oplus T_2^1 \oplus T_1^1.$$

For $\gr{43}$, we have $43-1 = 42 = 2*21$, and so we need to consider the divisors $1,3,7,21$.  As we computed above, the $d=21$ gives two period-6 orbits.  Since $\phi(7) = 6$ and $\ordt 7 = 3$, the $d=7$ term gives two period-3 orbits, and $d=3$ gives one period two orbit.  Therefore $\gr{43}$ has 7 components:
$$\gr{43} \cong C_6(T_2^1) \oplus C_6(T_2^1) \oplus C_3(T_2^1)\oplus C_3(T_2^1) \oplus C_2(T_2^1) \oplus T_2^1 \oplus T_1^1.$$

Now let us consider $n=817 = 19*43$.  We know that the graph $\gr{817}$ will contain at least $28$ components, since one of the graphs has 4 and the other 7.  But in fact, it will have many more components:  each time we take the product of two period-6 orbits, we actually get {\em six} period-6 orbits, etc.

For example, let us consider the $C_6(T_2^1)$ component of $\gr{19}$, and multiply it against the seven components of $\gr{43}$.  We get
\begin{align*}
  C_6(T_2^1) \kgp C_6(T_2^1) &= C_6(T_4^1) \quad\mbox{times 6}; \\
  C_6(T_2^1) \kgp C_3(T_2^1) &= C_6(T_4^1) \quad\mbox{times 3};\\
  C_6(T_2^1) \kgp C_2(T_2^1) &= C_6(T_4^1) \quad\mbox{times 2};\\
  C_6(T_2^1) \kgp T_2^1 &= C_6(T_4^1) \quad\mbox{times 1};\\
 C_6(T_2^1) \kgp T_1^1 &= C_6(T_2^1).   
\end{align*}
And also note that the first two components listed above each appear twice in $\gr{43}$, so when we multiply $C_6(T_2^1)$ by the components of $\gr{43}$ we obtain $2*6+2*3+2+1 = 21$ copies of $C_6(T_4^1)$ and one copy of $C_6(T_2^1)$.

When we multiply the $C_2(T_2^1)$ component of $\gr{19}$ against the other seven, we obtain
\begin{align*}
  C_2(T_2^1) \kgp C_6(T_2^1) &= C_6(T_4^1) \quad\mbox{times 2}; \\
  C_2(T_2^1) \kgp C_3(T_2^1) &= C_6(T_4^1) \quad\mbox{times 1};\\
  C_2(T_2^1) \kgp C_2(T_2^1) &= C_2(T_4^1) \quad\mbox{times 2};\\
  C_2(T_2^1) \kgp T_2^1 &= C_2(T_4^1) \quad\mbox{times 1};\\
 C_2(T_2^1) \kgp T_1^1 &= C_2(T_2^1).   
\end{align*}
This gives a total of $2*2+2*1 = 6$ copies of $C_6(T_4^1)$, one copy of $C_2(T_4^1)$, and one copy of $C_2(T_2^1)$.

The other two components are simpler, since they are trees.  Multiplying $T_2^1$ against the components of $\gr{43}$ gives $2$ copies of $C_6(T_4^1)$, $2$ copies of $C_3(T_4^1)$, $1$ copy of $C_2(T_4^1)$, $1$ copy of $T_4^1$, and finally one copy of $T_2^1$.  Multiplying $T_1^1$ against these components just copies them.

Putting this all together, we have

\begin{center}
\begin{tabular}{|c|c|}\hline
Graph type & copies\\\hline
$C_6(T_4^1)$ & 21 + 6 +2 = 29\\\hline
$C_3(T_4^1)$ & 2\\\hline
$C_2(T_4^1)$ & 1+1=2\\\hline
$C_6(T_2^1)$& 1 + 1 +1  = 3\\\hline
$C_3(T_2^1)$& 1 + 1 + 1 = 3\\\hline
$C_2(T_2^1)$& 1 + 1 + 1 = 3\\\hline
$T_4^1$ & 1\\\hline
$T_2^1$ & 1+1=2\\\hline
$T_1^1$ & 1\\\hline
\end{tabular}
\end{center}

We can compare this against the graph given in Figure~\ref{fig:817}, which was obtained by direct computation.  Note that $C_\alpha(T_4^1)$ will be a cycle of period $\alpha$ with three leaves sticking out of each node, and  $C_\alpha(T_2^1)$ will be a cycle of period $\alpha$ with a single stem out of each node.

\begin{figure}[th]
\begin{center}
\includegraphics[width=0.85\textwidth]{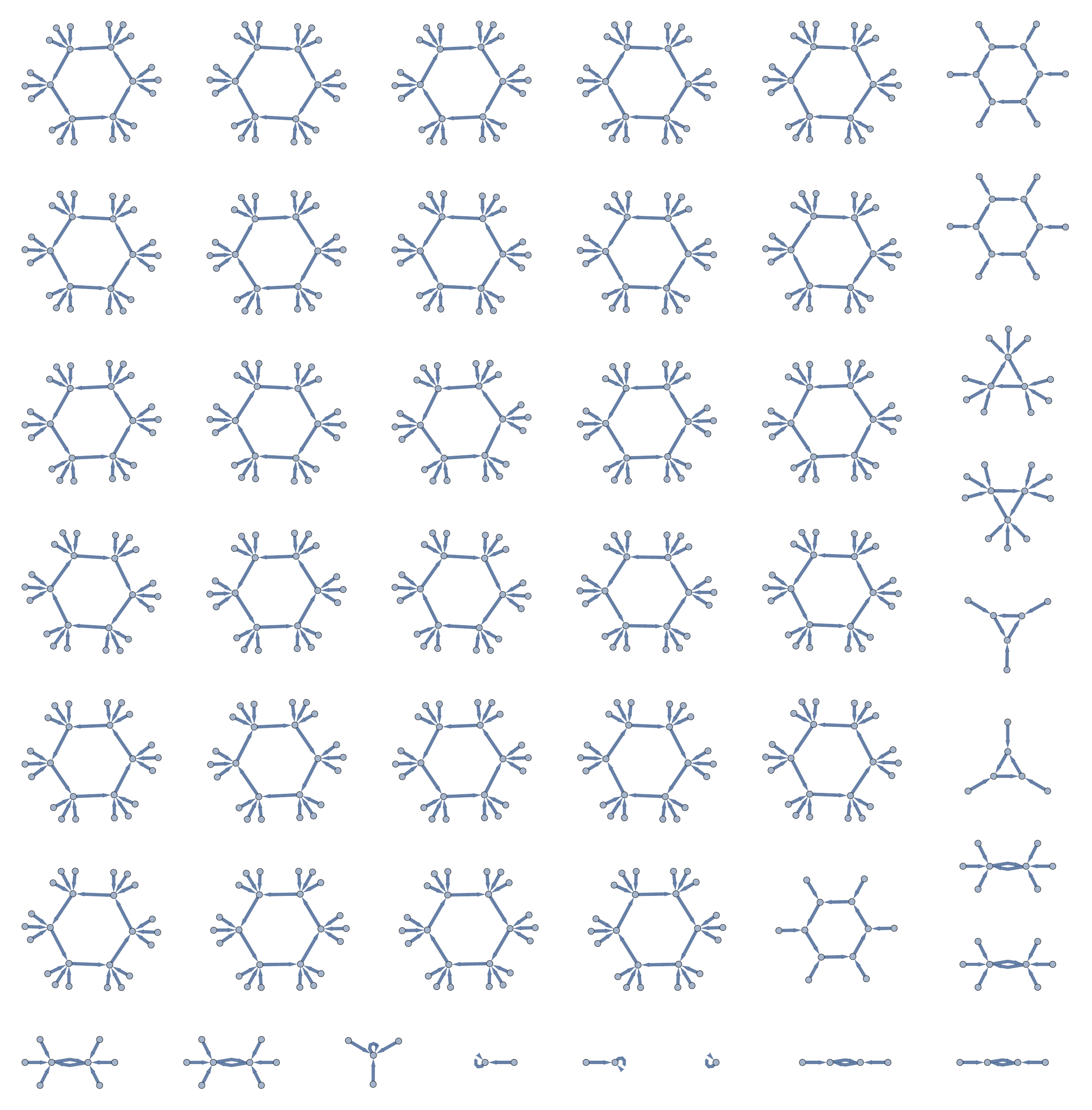}
\caption{The (unlabeled) graph $\gr{817}$, chosen because $817=19*43$ (q.v.~Figure~\ref{fig:18}) }
\label{fig:817}
\end{center}
\end{figure}

\subsection{When the nilpotents end up chillin' in the corner}

Consider $n=5^4=625$.  Here we want to study the graph $\gr{625}$, and in particular, focus on $\nilp{625}$.  

Let us use the formula in Theorem~\ref{thm:nilpotent}.  We first compute the graph $\tree 5 \ty 2$.  Recalling the graph $\units5$, we see that $1$ has two preimages: $1$ and $4$, and $4$ has two preimages: $2$ and $3$.  In this case the actual values of the preimages won't be important, since $\tree 5 \ty 1$ is a leaf regardless of $\ty$, due to the single power of $5$.  

According to the theorem, $\tree 5 1 2$ has five incoming copies of $\tree 5 1 1$ and five incoming copies of $\tree 5 4 1$, but these are all leaves, so $\tree 5 1 2$ is just a regular tree of width $10$, or $T_{10}^1$.  We get the same result for $\tree 5 4 2$.  And, of course, $\tree 5 2 2 = \tree 5 3 2$ is just a single node.

We also have that $\tree 5 \ty 3$ is a node simply because $3$ is odd.

Now, the in-neighborhood of $0$ will be any numbers with 2 or 3 powers of $5$ in them.  There are four such preimages with power three ($\tree  5 \ty 3$ for $\ty=1,2,3,4$, which are all nodes) and $\phi(25)=20$ such preimages of power two.  As we say above, half of these are trees $T_{10}^1$ and the other half are nodes.  Putting all of this together, the in-neighborhood of $0$ is $10$ copies of $T_{10}^1$ and $14$ leaves. See Figure~\ref{fig:625attractor}, where we have plotted $\nilp{625}$ .

\begin{figure}[th]
\begin{center}
\includegraphics[width=0.95\textwidth]{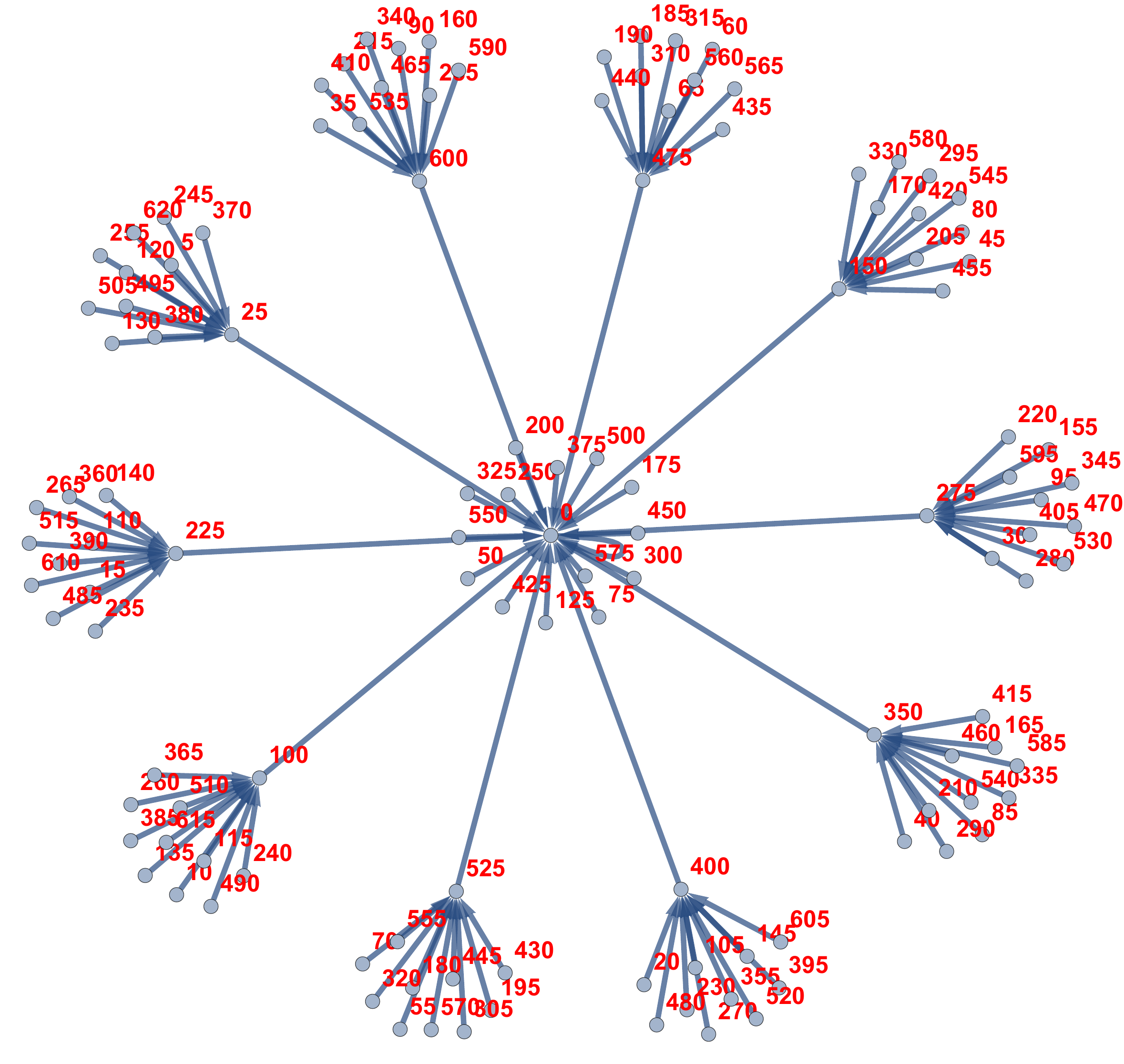}
\caption{The graph $\nilp{625}$.}
\label{fig:625attractor}
\end{center}
\end{figure}

Note the structure is as we say:  the node $0$ has fourteen leaves leading into it:  the four that come from $5^3$ terms:  125, 250, 375, 500, and the ten that come from $5^2$ terms, where the prefactor is $2$ or $3$ modulo 5:  $25*\{2,3,7,8,12,13,17,18,22,23\}$.  We also see that there are ten trees that go into $0$, coming from $5^2$ terms, where the prefactor is $1$ or $4$ modulo 5:  $25*\{1,4,6,9,11,14,16,19,21,24\}$.

Of course, if we want to understand the full graph $\gr{625}$ we also need to consider the units.  Note that $\phi(625) = 625-125 = 500 = 2^2*125$.  The divisors are $1,5,25,125$, where we have $\phi(125) = \ordt{125} = 100$, so one period-100 orbit, $\phi(25) = \ordt{25} = 20$, so one period-20 orbit, $\phi(5) = \ordt5=4$, so one period-4 orbit, and of course one fixed point.  Since $\theta=2$ these are all decorated with copies of $T_2^2$, so in fact we have 
\begin{equation*}
  \units{625} = C_{100}(T_2^2) \oplus C_{20}(T_2^2) \oplus C_{4}(T_2^2) \oplus T_2^2, 
\end{equation*}
so $\gr{625}$ has five components, see Figure~\ref{fig:625}.  (We can see that the tree $\nilp{625}$ is just chilling down there in the corner.)

\begin{figure}[th]
\begin{center}
\includegraphics[width=0.65\textwidth]{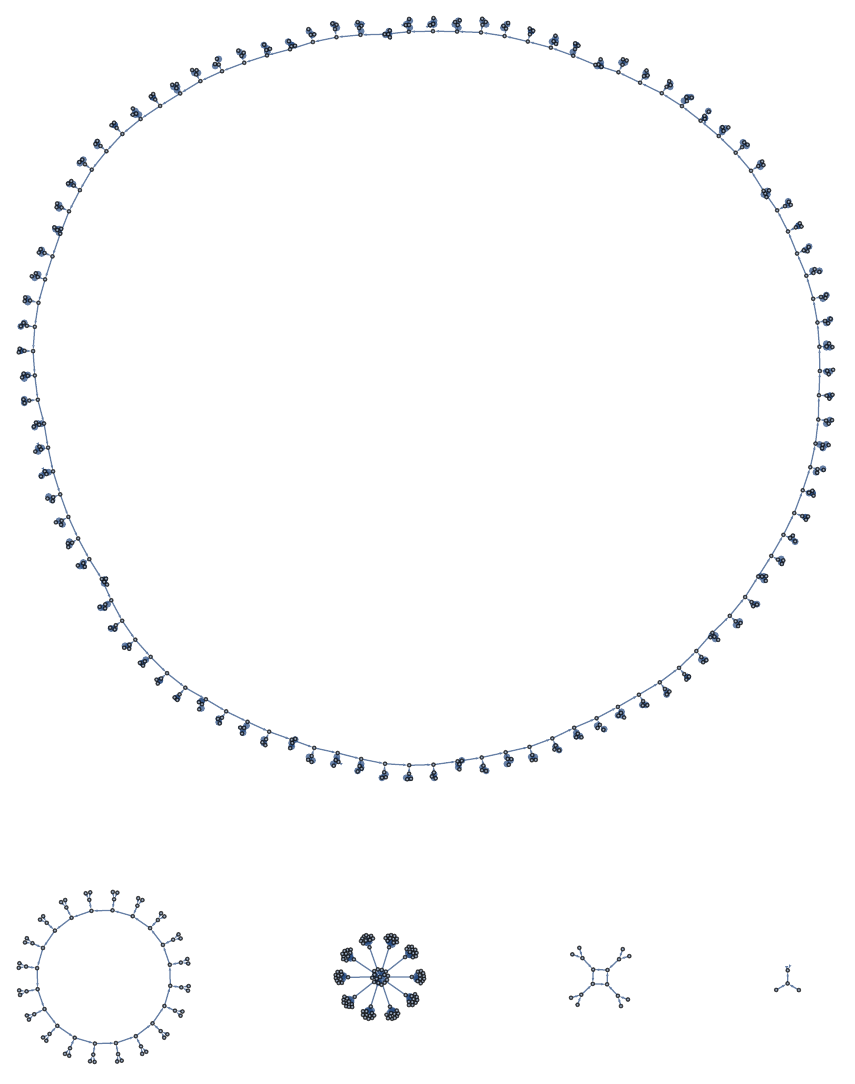}
\caption{The graph $\gr{625}$.}
\label{fig:625}
\end{center}
\end{figure}

\subsection{Well, *these* nilpotents ain't messing around}

Here we are going to describe the graph $\gr n$ when $n=177147=3^{11}$.

First we consider $\units{3^{11}}$.  We have $\phi(3^{11}) = 2\cdot 3^{10}$, so we have $\theta = 1$ and $\mu = 3^{10}$.  The divisors $d$ of $3^{10}$ are, of course, $3^k$ for $k=0,1,\dots, 10$.  In each of these cases, it turns out that $\phi(3^k) = \ordt{3^k} = 2\cdot 3^{k-1}$, so we have exactly 11 periodic orbits:  a fixed point, one of period 2, one of period 6, etc. all the way up to one of period $2\cdot 3^9 = 39366$.  Since $\theta=1$, all of these periodic orbits have a ``spoke'' coming out of each periodic point.

Now for $\nilp{3^{11}}$.  Let us first consider the neighborhood of $0$:  any $x$ of the form $x = \tx p^\ell$ with $\ell \ge 6$ will map directly into 0.  If $\ell$ is odd, then these are leaves.  If $\ell$ is even but $\tx \equiv 2\bmod 3$, then this is also a leaf.  In the other cases, we will have some more complex trees.  For each even $\ell$, if $\tx$ is $1\pmod 3$ we have a tree of type $\tree 3 1 \ell$, giving a total of   $1/2\phi(3^{11-k}) = 3^{10-k}$ of them, and if $\tx$ is $2\pmod 3$ it is a leaf, so we have  $1/2\phi(3^{11-k}) =3^{10-k}$ leaves.  If $\ell$ is odd we have $phi(3^{11-k}) = 2\cdot 3^{10-k}$ leaves.
Therefore the total number of leaves is
$1 + 6 + 9 + 54 + 81 = 151$.

From here we see that we should have attached to $0$:

\begin{table}[h]
\begin{center}
\begin{tabular}{|c|c|c|c|c|}\hline
Tree type &$\tree 3 1 {10}$ &$\tree 3 1 {8}$ &$\tree 3 1 {6}$ & leaves\\\hline
Count & 1 & 9 & 81 & 151\\\hline
\end{tabular}
\end{center}
\caption{The structures directly adjacent to $0$ in the graph $\nilp{3^{11}}$}
\label{tab:3^11}
\end{table}

Moreover, we can see that since $10=2\cdot 5$, $\tree 3 1 {10}$ is just a tree of depth one, since $y^2 = 3^{10}\bmod{3^{11}}$ iff $y = \tx 3^5$ with $\gcd(\tx,3)=1$ and $1\le \tx \le 3^6$, and there are $\phi(3^6) = 486$ such $\tx$'s, so $\tree 3 1 {10} \cong T_{486}^{(1)}$ is just a regular tree of width $486$ and depth $1$.  Similarly, $\tree 3 1 6$ is a tree of depth 1 and width $\phi(3^3) = 2\cdot 3^2 = 18$.

The more complicated case is $\tree 3 1 8$.  Note that $8=2^3$, so we can consider the unfurled graph of depth three $\U 3 1 3$, see Figure~\ref{fig:tree}.  According to Lemma~\ref{lem:layers}, we take this graph and expand it by the powers $3^4, 3^2, 3^1$, so that the in-neighborhood of $x=3^8$ has $3^4$ leaves coming in, and $3^4$ trees of type $\tree 3 1 4$, which themselves have $3^2$ leaves coming in, and $3^2$ trees coming in, and those are regular trees of the form $T_6^1$.  (And of course, for any $\tx 3^8$ with $\tx\equiv 1\bmod 3$ we get an isomorphic tree, so there are $9$ of them.)  See Figure~\ref{fig:3^8} for a visualization of $\tree 3 1 8$, which is also complicated and fun.  (We would have liked to put a visualization of the full $\nilp{3^{11}}$ here but at $59,049$ vertices and $177,147$ edges, it made our computer sad.)

\begin{figure}[th]
\begin{center}
\includegraphics[width=0.95\textwidth]{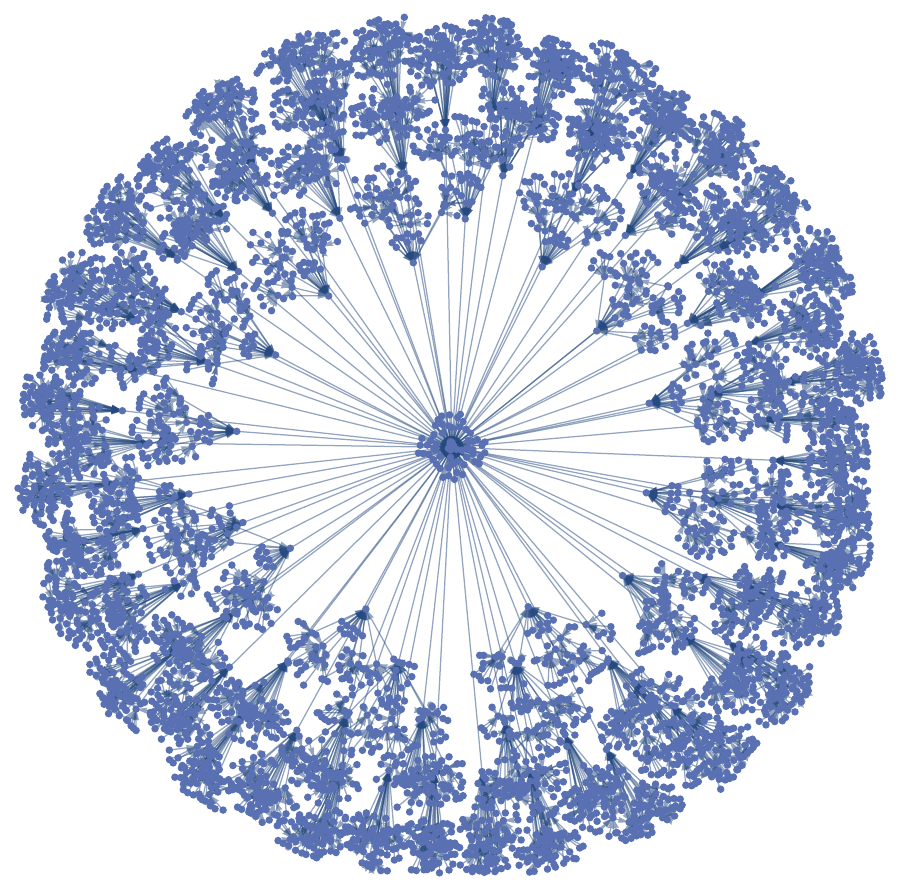}
\caption{The subgraph of $\gr{3^{11}}$ that goes to $x=3^8$, giving an example of $\tree 3 1 8$.  Its details are given in the text, but note that there are {\bf nine} copies of this bad boy attached to zero, not to mention all of the other things going in (see Table~\ref{tab:3^11}).}
\label{fig:3^8}
\end{center}
\end{figure}

\section{Conclusions}

We have presented some basic theory and give a few nice examples.  The examples were a lot of fun, and there's no doubt that one can find a whole host of other interesting examples.  I leave it to the readers of this article to explore and find more.


\begin{thebibliography}{1}

\bibitem{vasiga2004iteration}
Troy Vasiga and Jeffrey Shallit.
\newblock On the iteration of certain quadratic maps over GF(p).
\newblock {\em Discrete Mathematics}, 277(1-3):219--240, 2004.

\bibitem{vid}
Lee DeVille.
\newblock What does $x^2 \bmod n$ look like???  YouTube video, May 17, 2022.  \url{https://www.youtube.com/watch?v=RmqFm9HN3nw}

\bibitem{dummit1991abstract}
David Dummit and Richard Foote.
\newblock {\em Abstract Algebra}, volume 1999.
\newblock Prentice Hall Englewood Cliffs, NJ, 1991.

\bibitem{gauss1986english}
Carl~Friedrich Gauss.
\newblock Disquisitiones Arithmeticae.
\newblock English edition. translated by Arthur A. Clarke, 1986.

\end{thebibliography}
\end{document}